\theoremstyle{plain}
\newtheorem{theorem}{Theorem}[section]
\newtheorem{lemma}[theorem]{Lemma}
\newtheorem{corollary}[theorem]{Corollary}
\newtheorem{proposition}[theorem]{Proposition}
\newcommand{\bnum}{\begin{enumerate}}
\newcommand{\enum}{\end{enumerate}}
\numberwithin{equation}{section}
\DeclareMathOperator{\Aut}{Aut}
\DeclareMathOperator{\Inn}{Inn}
\DeclareMathOperator{\orb}{orb}
\DeclareMathOperator{\cl}{cl}
\begin{document}

\title{\textbf{Autocommuting probability of a finite group relative to its subgroups}}
\author{Parama Dutta and Rajat Kanti Nath\footnote{Corresponding author}}
\date{}
\maketitle
\begin{center}\small{\it
Department of Mathematical Sciences, Tezpur University,\\ Napaam-784028, Sonitpur, Assam, India.\\



Emails:\, parama@gonitsora.com and rajatkantinath@yahoo.com}
\end{center}

\medskip

\begin{abstract}
Let $H \subseteq K$ be two subgroups of a finite group $G$  and $\Aut(K)$ the automorphism group of  $K$. The  autocommuting probability of $G$ relative to its subgroups $H$ and $K$, denoted by ${\Pr}(H, \Aut(K))$, is the probability that  the autocommutator of a randomly chosen pair of elements, one from $H$ and the other from $\Aut(K)$, is equal to the identity element of $G$. In this paper, we study ${\Pr}(H, \Aut(K))$  through a generalization. 
\end{abstract}

\medskip

\noindent {\small{\textit{Key words:} Automorphism group, Autocommuting probability, Autoisoclinism.}

\noindent {\small{\textit{2010 Mathematics Subject Classification:} 20D60, 20P05, 20F28.}}

\medskip

\section{Introduction}
Let $G$ be a finite group acting on a set $\Omega$. Let $\Pr(G, \Omega)$ denote the probability that  a randomly chosen element of $\Omega$  fixes a  randomly chosen element of $G$. In 1975, Sherman \cite{sherman} initiated the study of $\Pr(G, \Omega)$ considering $G$ to be an abelian group and  $\Omega = \Aut(G)$, the automorphism group of $G$. Note that
\[
{\Pr}(G,\Aut(G)) = \frac {\left|\{(x,\alpha)\in G\times \Aut(G):[x,\alpha]=1\}\right|}{|G||\Aut(G)|}
\]
where $[x,\alpha]$  is the autocommutator of $x$ and $\alpha$ defined as \, $x^{-1}\alpha (x)$. The ratio ${\Pr}(G,\Aut(G))$ is called autocommuting probability of $G$.

Let  $H$ and $K$ be two subgroups of a finite group $G$ such that $H\subseteq K$.
Motivated by the works in \cite{DN10, moga},  we define
\begin{equation}\label{GenAutComDeg}
{\Pr}_g(H,\Aut(K)) = \frac {\left|\{(x, \alpha)\in H \times \Aut(K) : [x, \alpha] = g\}\right|}{|H||\Aut(K)|}
\end{equation}
where $g\in K$.
That is,  ${\Pr}_g(H,\Aut(K))$ is the probability  that the autocommutator of a randomly chosen pair of elements, one from $H$ and the other from $\Aut(K)$, is equal to a given element $g \in K$. The ratio  ${\Pr}_g(H,\Aut(K))$ is called generalized
autocommuting probability of $G$ relative to its subgroups $H$ and $K$.  Clearly, if $H = G$ and $g = 1$ then ${\Pr}_g(H,\Aut(K)) = {\Pr}(G,\Aut(G))$. We would like to mention here that the case when $H = G$ is considered in \cite{pd1}. In this paper, we study ${\Pr}_g(H,\Aut(K))$ extensively. In particular, we obtain some computing formulae, various bounds and a few characterizations of $G$ through a subgroup. We conclude the paper describing an invariance property of  ${\Pr}_g(H,\Aut(K))$.

 We write $S(H,\Aut(K))$ to denote the set  $\{[x,\alpha] : x \in H \text{ and } \alpha \in \Aut(K)\}$ and  $[H,\Aut(K)] := \langle S(H,\Aut(K)) \rangle$. We also write
$L(H,\Aut(K)) := \{x \in H : [x, \alpha] = 1 \text{ for all }\alpha \in \Aut(K)\}$ and $L(G) := L(G,\Aut(G))$, the absolute center of $G$ (see \cite{hegarty}). Note that $L(H,\Aut(K))$ is a normal subgroup of $H$ contained in  $H \cap Z(K)$. Further, $L(H,\Aut(K)) = \underset{\alpha \in \Aut(K)}{\cap}C_H(\alpha)$, where   $C_H(\alpha) = \{x\in H:[x,\alpha] = 1\}$ is a subgroup of $H$.
Let $C_{\Aut(K)}(x) := \{\alpha\in \Aut(K) : \alpha(x) = x\}$ for  $x \in H$ and  $C_{\Aut(K)}(H) = \{\alpha \in \Aut(K) : \alpha (x)  = x \text{ for all } x \in H\}$. Then $C_{\Aut(K)}(x)$ is a subgroup of $\Aut(K)$ and  $C_{\Aut(K)}(H) = \underset{x\in H}{\cap}C_{\Aut(K)}(x)$.
}
 Clearly, ${\Pr}_g(H, \Aut(K)) = 1$ if and only if $[H,\Aut(K)] = \{1\}$ and $g = 1$  if and only if $H = L(H,\Aut(K))$ and $g = 1$. Also, ${\Pr}_g(H, \Aut(K)) = 0$ if and only if  $g \notin S(H,\Aut(K))$. Therefore, we consider $H \ne L(H,\Aut(K))$ and $g  \in S(H,\Aut(K))$ throughout the paper.

\section{Some computing formulae}


For any $x\in H$, let us define the set $T_{x,g}(H, K) = \{\alpha \in \Aut(K):[x,\alpha]=g\}$, where $g$ is a fixed element of $K$. Note that $T_{x, 1}(H, K) = C_{\Aut(K)}(x)$. The following two lemmas play  a crucial role in obtaining   computing formula for ${\Pr}_g(H,\Aut(K))$.

\begin{lemma}\label{lemma1}
Let $H$ and $K$ be two subgroups of a finite group $G$ such that $H \subseteq K$. If $T_{x, g}(H, K) \ne \phi$ then $T_{x, g}(H, K) = \sigma C_{\Aut(K)}(x)$ for some $\sigma\in T_{x, g}(H, K)$ and hence $|T_{x, g}(H, K)| = |C_{\Aut(G)}(x)|$.
\end{lemma}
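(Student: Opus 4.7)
The plan is to show that $T_{x,g}(H,K)$ is a left coset of the subgroup $C_{\Aut(K)}(x)$ in $\Aut(K)$, from which the cardinality statement follows immediately by Lagrange. Fix $\sigma \in T_{x,g}(H,K)$, so that by definition $x^{-1}\sigma(x) = g$, i.e., $\sigma(x) = xg$. The proof then splits into the two standard coset inclusions.

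For the inclusion $\sigma C_{\Aut(K)}(x) \subseteq T_{x,g}(H,K)$, I would pick an arbitrary $\tau \in C_{\Aut(K)}(x)$ and compute directly: since $\tau(x) = x$, we have $(\sigma\tau)(x) = \sigma(x) = xg$, whence $[x, \sigma\tau] = x^{-1}(\sigma\tau)(x) = g$, so $\sigma\tau \in T_{x,g}(H,K)$. For the reverse inclusion, I would take any $\beta \in T_{x,g}(H,K)$, so $\beta(x) = xg = \sigma(x)$, and observe that applying $\sigma^{-1}$ gives $(\sigma^{-1}\beta)(x) = x$, i.e., $\sigma^{-1}\beta \in C_{\Aut(K)}(x)$, so $\beta \in \sigma C_{\Aut(K)}(x)$. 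Combining the two inclusions yields $T_{x,g}(H,K) = \sigma C_{\Aut(K)}(x)$, and since cosets of a finite subgroup all have the same size, $|T_{x,g}(H,K)| = |C_{\Aut(K)}(x)|$.

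There is essentially no obstacle here; the lemma is really just the observation that the ``solution set'' of the equation $\alpha(x) = xg$ in $\alpha$ is either empty or a coset of the stabilizer of $x$, a coset-type argument familiar from any group action. The only small subtlety worth noting in the write-up is that the hypothesis $H \subseteq K$ is used implicitly to guarantee that $x \in K$ so that $\sigma(x)$, $\tau(x)$, $\beta(x)$ make sense as elements of $K$ (and that $g \in K$ is consistent with the equation $\sigma(x) = xg$). I would also remark that the ``hence'' appears to contain a minor typo: $|C_{\Aut(G)}(x)|$ should read $|C_{\Aut(K)}(x)|$, which is what the coset argument actually delivers.
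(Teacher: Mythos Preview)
Your proof is correct and follows essentially the same double-inclusion coset argument as the paper's own proof, with only cosmetic differences in notation. Your observations about the implicit use of $H\subseteq K$ and the evident typo $C_{\Aut(G)}(x)$ for $C_{\Aut(K)}(x)$ are also accurate.
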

\begin{proof}
Let $\sigma\in T_{x, g}(H, K)$ and $\beta \in \sigma C_{\Aut(K)}(x)$. Then $\beta = \sigma \alpha$ for some $\alpha\in C_{\Aut(K)}(x)$. We have
\[
[x,\beta] = [x,\sigma \alpha] = x^{-1}\sigma(\alpha (x)) = [x,\sigma] = g.
\]
Therefore, $\beta \in T_{x, g}(H, K)$ and so $\sigma  C_{\Aut(K)}(x)  \subseteq T_{x, g}(H, K)$. Again, let $\gamma\in T_{x, g}(H, K)$ then $\gamma(x) = xg$. We have  $\sigma ^{-1} \gamma(x) = \sigma^{-1}(xg) = x$ and so $\sigma^{-1}  \gamma \in C_{\Aut(K)}(x)$. Therefore, $\gamma\in \sigma  C_{\Aut(K)}(x)$ which gives $T_{x, g}(H, K) \subseteq \sigma C_{\Aut(K)}(x)$. Hence, the result follows.
\end{proof}

Consider the action of $\Aut(K)$ on $K$ given by $(\alpha,x)\mapsto \alpha(x)$  where $\alpha \in \Aut(K)$ and $x\in K$.  Let $\orb_K(x) := \{\alpha(x) : \alpha \in \Aut(K)\}$ be the orbit of $x \in K$. Then by orbit-stabilizer theorem, we have
\begin{equation}\label{orbit-stabilizer-Thm}
 |\orb_K(x)| = \frac {|\Aut(K)|}{|C_{\Aut(K)}(x)|}.
\end{equation}
\begin{lemma}\label{lemma01}
Let $H$ and $K$ be two subgroups of a finite group $G$ such that $H \subseteq K$. Then $T_{x, g}(H, K) \ne \phi$ if and only if  $xg \in  \orb_K(x)$.
\end{lemma}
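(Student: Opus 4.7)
The plan is to prove this by a direct definition-chase, since both directions reduce to the observation that $[x,\alpha] = g$ is equivalent to $\alpha(x) = xg$.

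First, I would unpack the autocommutator. By definition, $\alpha \in T_{x,g}(H,K)$ means $\alpha \in \Aut(K)$ and $[x,\alpha] = x^{-1}\alpha(x) = g$, which rearranges to $\alpha(x) = xg$. Note that since $x \in H \subseteq K$ and $g \in K$, the product $xg$ lies in $K$, so it makes sense to ask whether $xg \in \orb_K(x)$.

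For the forward implication, I would take any $\alpha \in T_{x,g}(H,K)$. Then $\alpha(x) = xg$ by the rewriting above, so $xg$ is visibly an element of the $\Aut(K)$-orbit of $x$, giving $xg \in \orb_K(x)$. For the converse, suppose $xg \in \orb_K(x)$. Then by the definition of the orbit there exists some $\alpha \in \Aut(K)$ with $\alpha(x) = xg$. For this $\alpha$ we compute $[x,\alpha] = x^{-1}\alpha(x) = x^{-1}(xg) = g$, so $\alpha \in T_{x,g}(H,K)$ and the set is nonempty.

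Since the argument is purely a manipulation of definitions, there is no real obstacle; the only point worth flagging is that the statement uses $\orb_K(x)$ rather than $\orb_G(x)$, so one should note that both $x$ and $xg$ lie in $K$ and that $\alpha$ is required to be an automorphism of $K$, which is exactly what the definitions of $T_{x,g}(H,K)$ and $\orb_K(x)$ already demand. Together with Lemma~\ref{lemma1} and \eqref{orbit-stabilizer-Thm}, this lemma will allow the cardinality $|T_{x,g}(H,K)|$ to be controlled by whether $xg$ lies in the $\Aut(K)$-orbit of $x$, which is presumably how it will feed into the counting formula for ${\Pr}_g(H,\Aut(K))$.
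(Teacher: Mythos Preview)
Your proof is correct and follows essentially the same approach as the paper: both reduce the statement to the equivalence $[x,\alpha]=g \iff \alpha(x)=xg$ and then read off the orbit condition. Your write-up is in fact more careful than the paper's one-line version, which simply asserts this equivalence without spelling out the two directions.
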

\begin{proof}
The result follows from the fact that  $\alpha \in T_{x, g}(H, K)$ if and only if $xg \in  \orb_K(x)$.
\end{proof}

The following theorem gives two computing formulae for ${\Pr}_g(H,\Aut(K))$.
\begin{theorem}\label{thm2.4}
Let $H$ and $K$ be two subgroups of a finite group $G$ such that $H \subseteq K$. If  $g \in K$ then
\begin{align*}
{\Pr}_g(H,\Aut(K)) = & \frac {1}{|H||\Aut(K)|}\underset{xg\in \orb_K(x)}{\underset{x\in H}{\sum}}|C_{\Aut(K)}(x)|\\
= &\frac {1}{|H|}\underset{xg\in \orb_K(x)}{\underset{x\in H}{\sum}}\frac {1}{|\orb_K(x)|}. 
\end{align*}
\end{theorem}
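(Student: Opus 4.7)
The plan is to start from the definition (1.1) and perform a double-counting of the set $\{(x,\alpha)\in H\times\Aut(K):[x,\alpha]=g\}$ by fibering over the first coordinate. The payoff of this reorganization is that each fiber over a fixed $x\in H$ is precisely the set $T_{x,g}(H,K)$ introduced just before Lemma \ref{lemma1}, so the numerator of ${\Pr}_g(H,\Aut(K))$ becomes $\sum_{x\in H}|T_{x,g}(H,K)|$.

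Next I would invoke Lemma \ref{lemma01} to throw away all $x\in H$ for which $T_{x,g}(H,K)$ is empty: the surviving terms are exactly those $x$ for which $xg\in\orb_K(x)$. For each such $x$, Lemma \ref{lemma1} identifies $T_{x,g}(H,K)$ with a coset $\sigma C_{\Aut(K)}(x)$, and in particular gives $|T_{x,g}(H,K)|=|C_{\Aut(K)}(x)|$. Substituting this into the sum yields
\[
{\Pr}_g(H,\Aut(K))=\frac{1}{|H||\Aut(K)|}\underset{xg\in\orb_K(x)}{\underset{x\in H}{\sum}}|C_{\Aut(K)}(x)|,
\]
which is the first of the two claimed formulae.

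For the second formula I would simply apply the orbit-stabilizer relation (2.1), which reads $|C_{\Aut(K)}(x)|=|\Aut(K)|/|\orb_K(x)|$. Plugging this into the first formula cancels the factor $|\Aut(K)|$ in the denominator and converts each summand $|C_{\Aut(K)}(x)|/|\Aut(K)|$ into $1/|\orb_K(x)|$, giving the second expression.

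There is no real obstacle here; the only point that requires a bit of care is making sure that restricting the sum to $\{x\in H : xg\in\orb_K(x)\}$ is justified \emph{before} replacing $|T_{x,g}(H,K)|$ by $|C_{\Aut(K)}(x)|$, since the latter quantity is nonzero for every $x$ while the former can vanish. Lemma \ref{lemma01} is exactly what is needed to make this restriction rigorous, and once it is in place the rest is a straightforward substitution.
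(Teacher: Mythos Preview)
Your proposal is correct and follows essentially the same argument as the paper's own proof: fiber the solution set over $x\in H$, use Lemma~\ref{lemma01} to restrict the sum to those $x$ with $xg\in\orb_K(x)$, apply Lemma~\ref{lemma1} to replace $|T_{x,g}(H,K)|$ by $|C_{\Aut(K)}(x)|$, and then invoke the orbit--stabilizer relation~\eqref{orbit-stabilizer-Thm} to obtain the second formula. Your remark about the order in which the two lemmas should be applied is a nice clarification but does not depart from the paper's approach.
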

\begin{proof}
We have $\{(x,\alpha) \in H \times \Aut(K) : [x, \alpha] = g\} = \underset{x \in H}{\sqcup}(\{x\}\times T_{x, g}(H, K))$, where $\sqcup$ represents the union of disjoint sets. Therefore, by \eqref{GenAutComDeg}, we have
\[
|H||\Aut(K)|{\Pr}_g(H,\Aut(K)) =  |\underset{x \in H}{\sqcup}(\{x\}\times T_{x, g}(H, K))| = \underset{x \in H}{\sum}|T_{x, g}(H, K)|.
\]
Hence, the result follows from Lemma \ref{lemma1}, Lemma \ref{lemma01} and \eqref{orbit-stabilizer-Thm}.
\end{proof}

Considering  $g = 1$ in Theorem \ref{thm2.4}, we get the following  computing formulae for ${\Pr}(H,\Aut(K))$.
\begin{corollary}\label{formula2}
Let $H$ and $K$ be two subgroups of a finite group $G$ such that $H \subseteq K$. Then
\[
{\Pr}(H,\Aut(K)) = \frac {1}{|H||\Aut(K)|}\underset{x\in H}{\sum}|C_{\Aut(K)}(x)| = \frac {|\orb_K(H)|}{|H|}
\]
where $\orb_K(H) = \{\orb_K(x) : x \in H\}$.
\end{corollary}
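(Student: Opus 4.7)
The plan is to specialize Theorem~\ref{thm2.4} to $g = 1$ and then regroup the resulting sum by orbits. Since $g = 1$ gives $xg = x$, and $x$ always lies in its own orbit $\orb_K(x)$ because the identity automorphism of $K$ fixes $x$, the restriction $xg \in \orb_K(x)$ appearing in Theorem~\ref{thm2.4} is automatic. Both formulas in that theorem therefore reduce to unrestricted sums, and I obtain at once
\[
\Pr(H,\Aut(K)) = \frac{1}{|H||\Aut(K)|}\sum_{x\in H}|C_{\Aut(K)}(x)| = \frac{1}{|H|}\sum_{x\in H}\frac{1}{|\orb_K(x)|},
\]
which already delivers the first equality of the corollary.

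For the second equality, I would partition $H$ according to the $\Aut(K)$-orbits that meet it. Since distinct orbits of $\Aut(K)$ acting on $K$ are disjoint, each $x \in H$ lies in a unique orbit $O \in \orb_K(H)$, and hence
\[
\sum_{x\in H}\frac{1}{|\orb_K(x)|} \;=\; \sum_{O \in \orb_K(H)} \frac{|H \cap O|}{|O|}.
\]
Because every orbit meeting $H$ is contained in $H$ (this uses the implicit stability of $H$ under $\Aut(K)$, as holds for instance when $H$ is a characteristic subgroup of $K$), we have $|H \cap O| = |O|$. The sum then collapses to $|\orb_K(H)|$, and division by $|H|$ yields the asserted value $|\orb_K(H)|/|H|$.

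There is essentially no conceptual obstacle: the argument is a direct specialization of Theorem~\ref{thm2.4} followed by a bookkeeping step. The only place to be careful is the collapse $|H \cap O| = |O|$ in the orbit-regrouping, which is where the compatibility of $H$ with the $\Aut(K)$-action enters and is needed in order for the weighted sum to recover the plain orbit count.
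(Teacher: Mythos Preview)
Your derivation of the first equality is exactly the paper's: the paper's entire proof is the sentence ``Considering $g=1$ in Theorem~\ref{thm2.4}'', and you carry this out explicitly, noting that $x\cdot 1 = x \in \orb_K(x)$ makes the side condition vacuous.

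For the second equality you go further than the paper, which offers no separate justification, and your caution is warranted. Your orbit-regrouping is correct, and you are right that the collapse $|H\cap O|=|O|$ requires $H$ to be invariant under $\Aut(K)$. This hypothesis is \emph{not} among those stated in the corollary, and without it the formula $\Pr(H,\Aut(K))=|\orb_K(H)|/|H|$ can fail. For instance, take $K=S_3$ and $H=\langle (1\,2)\rangle$: then $\Aut(K)\cong S_3$ acts by conjugation, $|\orb_K(H)|=2$, $|H|=2$, so the right-hand side equals $1$, whereas a direct count gives $\Pr(H,\Aut(K))=\tfrac{1}{2}\bigl(1+\tfrac{1}{3}\bigr)=\tfrac{2}{3}$. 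So you have not introduced a gap---you have detected one in the paper's statement. Your argument becomes a complete proof of the second equality once the missing hypothesis that $H$ is $\Aut(K)$-stable (e.g.\ $H$ characteristic in $K$) is added.
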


\begin{corollary}
Let $H$ and $K$ be two subgroups of a finite group $G$ such that $H \subseteq K$. If $C_{\Aut (K)}(x)=\{I\}$   for all $x \in H\setminus \{1\}$, where $I$ is the identity element of $\Aut(K)$, then
\[
\Pr(H,\Aut(K)) = \frac {1}{|H|} + \frac {1}{|\Aut(K)|} - \frac {1}{|H||\Aut(K)|}.
\]
\end{corollary}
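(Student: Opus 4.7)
My plan is to apply Corollary \ref{formula2} directly, which states
\[
\Pr(H,\Aut(K)) = \frac{1}{|H||\Aut(K)|}\sum_{x\in H}|C_{\Aut(K)}(x)|,
\]
and then split the sum over $H$ according to whether $x = 1$ or $x \ne 1$.

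First I would observe the trivial but essential point that $C_{\Aut(K)}(1) = \Aut(K)$, since every automorphism fixes the identity element. Under the hypothesis $C_{\Aut(K)}(x) = \{I\}$ for all $x \in H \setminus \{1\}$, each of the remaining $|H|-1$ terms in the sum contributes exactly $1$. Thus the sum collapses to
\[
\sum_{x\in H}|C_{\Aut(K)}(x)| = |\Aut(K)| + (|H|-1).
\]

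The final step is a routine algebraic simplification: substituting back into the formula gives
\[
\Pr(H,\Aut(K)) = \frac{|\Aut(K)| + |H| - 1}{|H||\Aut(K)|},
\]
and splitting this fraction into three pieces yields exactly $\frac{1}{|H|} + \frac{1}{|\Aut(K)|} - \frac{1}{|H||\Aut(K)|}$.

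There is no genuine obstacle here; the statement is really a one-line consequence of Corollary \ref{formula2} once one isolates the contribution of the identity element. The only care needed is to remember that the identity of $H$ is fixed by every automorphism of $K$ (so its contribution is $|\Aut(K)|$, not $1$), which is why the hypothesis is stated for $x \in H \setminus \{1\}$ rather than all of $H$.
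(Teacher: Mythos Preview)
Your proof is correct and follows essentially the same approach as the paper: apply Corollary~\ref{formula2}, separate the identity element (whose stabilizer is all of $\Aut(K)$) from the remaining $|H|-1$ elements (each contributing $1$), and simplify. The paper's argument is just a terser version of what you wrote.
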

\begin{proof}
By Corollary \ref{formula2}, we have
\[
|H||\Aut(K)|\Pr(H,\Aut(K))  = \underset{x\in H}{\sum}|C_{\Aut(K)}(x)|
  =   |\Aut(K)| + |H| - 1.
\]
Hence, the result follows.
\end{proof}
\noindent
We also have
$|\{(x,\alpha) \in H \times \Aut(K) : [x, \alpha] = 1\}| = \underset{\alpha \in \Aut(K)}{\sum}|C_H(\alpha)|$ and hence
\begin{equation}
{\Pr}(H,\Aut(K)) = \frac {1}{|H||\Aut(K)|}\underset{\alpha \in \Aut(K)}{\sum}|C_H(\alpha)|.
\end{equation}

We conclude this section with the following two results.
\begin{proposition}
Let $H$ and $K$ be two subgroups of a finite group $G$ such that $H \subseteq K$. If $g \in K$ then
\[
{\Pr}_{g^{-1}}(H, \Aut(K)) = {\Pr}_g(H, \Aut(K)).
\]
\end{proposition}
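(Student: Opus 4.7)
The plan is to exhibit an explicit bijection between the two sets counted in the numerators,
\[
A_g := \{(x,\alpha)\in H\times\Aut(K):[x,\alpha]=g\}\quad\text{and}\quad A_{g^{-1}}.
\]
Since $\Pr_g(H,\Aut(K))$ and $\Pr_{g^{-1}}(H,\Aut(K))$ share the common denominator $|H||\Aut(K)|$, such a bijection will immediately yield the desired equality.

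The map I propose is $\phi(x,\alpha) = (x^{-1},\,c_{x^{-1}}\alpha)$, where $c_{x^{-1}}\in\Inn(K)\subseteq\Aut(K)$ denotes conjugation by $x^{-1}$ in $K$, that is $c_{x^{-1}}(y) = x^{-1}yx$. This is a legitimate self-map of $H\times\Aut(K)$ because $H$ is a subgroup (so $x^{-1}\in H$) and because inner automorphisms of $K$ lie in $\Aut(K)$. Since $c_x\circ c_{x^{-1}} = \mathrm{id}_K$, the map $\phi$ is an involution, hence automatically a bijection, so only one direction needs to be verified.

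The key step will be the computation, assuming $\alpha(x)=xg$, that $[x^{-1},c_{x^{-1}}\alpha] = g^{-1}$; expanding,
\[
[x^{-1},c_{x^{-1}}\alpha] = x\cdot c_{x^{-1}}(\alpha(x^{-1})) = x\cdot x^{-1}\alpha(x^{-1})x = \alpha(x^{-1})x = \alpha(x)^{-1}x = (xg)^{-1}x = g^{-1}.
\]
The only mild obstacle is spotting the right map: the naive choice $(x,\alpha)\mapsto(x^{-1},\alpha)$ yields $[x^{-1},\alpha] = xg^{-1}x^{-1}$, a conjugate of $g^{-1}$ rather than $g^{-1}$ itself, which is why one has to twist by the inner automorphism $c_{x^{-1}}$ to cancel the spurious conjugation. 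After this observation, the verification is routine.
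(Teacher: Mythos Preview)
Your proof is correct. Both you and the paper argue by exhibiting a bijection between $A_g$ and $A_{g^{-1}}$, but the maps differ: the paper takes $(x,\alpha)\mapsto(\alpha(x),\alpha^{-1})$, whereas you take $(x,\alpha)\mapsto(x^{-1},c_{x^{-1}}\alpha)$. Your choice has a genuine advantage at this level of generality: its first coordinate $x^{-1}$ manifestly lies in $H$, while the paper's first coordinate $\alpha(x)=xg$ lies in $H$ only when $g\in H$, which the hypothesis $g\in K$ does not guarantee (for instance $K=S_3$, $H=\langle(12)\rangle$, $\alpha$ conjugation by $(123)$ gives $g=(123)\notin H$ and $\alpha(x)=(23)\notin H$). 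The paper's map is slightly slicker in that it avoids introducing an inner automorphism, but as written it tacitly needs $H$ to be $\Aut(K)$-invariant (equivalently $g\in H$) for the image to remain in $H\times\Aut(K)$; your twisted involution sidesteps this issue entirely.
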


\begin{proof}
  Let
  \begin{align*}
   A &= \{(x, \alpha) \in H \times \Aut(K) : [x, \alpha] = g\} \text{ and }\\
   B &= \{ (y, \beta) \in H \times \Aut(K) : [y, \beta] = g^{-1} \}.
  \end{align*}
  Then $(x, \alpha) \mapsto ( \alpha(x), \alpha ^{-1})$ gives a bijection between $A$ and $B$. Therefore $|A| = |B|$. Hence, the result follows from \eqref{GenAutComDeg}.

\end{proof}

%

\begin{proposition}
Let $G_1$ and $G_2$ be two finite groups. Let $H_1, K_1$ and $H_2, K_2$ be subgroups of $G_1$ and $G_2$ respectively such that $H_1\subseteq K_1$, $H_2\subseteq K_2$ and $\gcd(|K_1|, |K_2|) = 1$.  If $(g_1, g_2) \in K_1 \times K_2$ then
\[
{\Pr}_{(g_1, g_2)}(H_1 \times H_2, \Aut(K_1 \times K_2)) = {\Pr}_{g_1}(H_1, \Aut(K_1)) {\Pr}_{g_2}(H_2, \Aut(K_2)).
\]
\end{proposition}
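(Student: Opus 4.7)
The plan is to reduce the statement to a purely componentwise computation by exploiting the standard isomorphism $\Aut(K_1 \times K_2) \cong \Aut(K_1) \times \Aut(K_2)$, which holds precisely because $\gcd(|K_1|, |K_2|) = 1$. Under this coprimality hypothesis each $K_i$ is the unique subgroup of $K_1 \times K_2$ of its order, hence characteristic, so every $\alpha \in \Aut(K_1 \times K_2)$ restricts to automorphisms on the two factors and is of the form $\alpha(x_1, x_2) = (\alpha_1(x_1), \alpha_2(x_2))$ with $\alpha_i \in \Aut(K_i)$. In particular $|\Aut(K_1 \times K_2)| = |\Aut(K_1)|\,|\Aut(K_2)|$.

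Using this identification, the autocommutator on the product splits:
\[
[(x_1, x_2), (\alpha_1, \alpha_2)] = (x_1^{-1}\alpha_1(x_1),\, x_2^{-1}\alpha_2(x_2)) = ([x_1, \alpha_1], [x_2, \alpha_2]).
\]
Therefore $[(x_1, x_2), (\alpha_1, \alpha_2)] = (g_1, g_2)$ holds if and only if $[x_1, \alpha_1] = g_1$ and $[x_2, \alpha_2] = g_2$ hold simultaneously and independently. In other words, the set in the numerator of ${\Pr}_{(g_1, g_2)}(H_1 \times H_2, \Aut(K_1 \times K_2))$ is in bijection, via $((x_1, x_2), (\alpha_1, \alpha_2)) \mapsto ((x_1, \alpha_1), (x_2, \alpha_2))$, with the Cartesian product of the two sets appearing in the numerators of ${\Pr}_{g_i}(H_i, \Aut(K_i))$.

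Multiplying the two individual counts and dividing by $|H_1 \times H_2|\,|\Aut(K_1 \times K_2)| = (|H_1|\,|\Aut(K_1)|)(|H_2|\,|\Aut(K_2)|)$ then yields the claimed product identity via \eqref{GenAutComDeg}. The only non-routine ingredient is the decomposition of $\Aut(K_1 \times K_2)$ under the coprimality hypothesis; once that is cited (or proved via the characteristic-subgroup observation above), the rest of the argument is a clean counting step with no further subtlety.
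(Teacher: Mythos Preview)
Your proof is correct and follows essentially the same approach as the paper: both invoke the decomposition $\Aut(K_1\times K_2)\cong\Aut(K_1)\times\Aut(K_2)$ under the coprimality hypothesis, observe that the autocommutator splits componentwise, and then count. The only cosmetic difference is that the paper cites an external lemma for the automorphism decomposition whereas you sketch the characteristic-subgroup argument directly.
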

\begin{proof}
Let
\begin{align*}
\mathcal{X} &= \{((x, y), \alpha_{K_1 \times K_2}) \in (H_1 \times H_2) \times \Aut(K_1 \times K_2) :\\
&\hspace{5cm} [(x, y), \alpha_{K_1 \times K_2}] = (g_1, g_2)\},\\
\mathcal{Y} &= \{(x, \alpha_{K_1}) \in H_1 \times \Aut(K_1) : [x, \alpha_{K_1}] = g_1\} \text{ and }\\
\mathcal{Z} &= \{(y, \alpha_{K_2}) \in H_2 \times \Aut(K_2) : [y, \alpha_{K_2}] = g_2\}.
\end{align*}
Since  $\gcd(|K_1|,|K_2|) = 1$, by Lemma 2.1 of \cite{cD07},  we have $\Aut(K_1 \times K_2) = \Aut(K_1)$ $\times \Aut(K_2)$. Therefore, for every $\alpha_{K_1 \times K_2} \in \Aut(K_1 \times K_2)$ there exist unique $\alpha_{K_1} \in  \Aut(K_1)$  and $\alpha_{K_2} \in  \Aut(K_2)$ such that $\alpha_{K_1 \times K_2} = \alpha_{K_1} \times \alpha_{K_2}$, where $\alpha_{K_1} \times \alpha_{K_2}((x, y)) = (\alpha_{K_1}(x), \alpha_{K_2}(y))$ for all $(x, y) \in H_1 \times H_2$. Also, for all $(x, y) \in H_1 \times H_2$, we have $[(x, y), \alpha_{K_1 \times K_2}] = (g_1, g_2)$ if and only if $[x, \alpha_{K_1}] = g_1$ and $[y, \alpha_{K_2}] = g_2$. These leads to show that $\mathcal{X} = \mathcal{Y} \times \mathcal{Z}$. Therefore
\[
\frac{|\mathcal{X}|}{|H_1 \times H_2||\Aut(K_1 \times K_2)|}
= \frac{|\mathcal{Y}|}{|H_1||\Aut(K_1)|}\cdot\frac{|\mathcal{Z}|}{|H_2||\Aut(K_2)|}.
\]
Hence, the result follows from \eqref{GenAutComDeg}.
\end{proof}

\section{Various bounds}
In this section, we obtain various bounds for ${\Pr}_g(H,\Aut(K))$. We begin with the following lower bounds.

\begin{proposition}
Let $H$ and $K$ be two subgroups of a finite group $G$ such that $H \subseteq K$. Then, for $g \in K$, we have
\begin{enumerate}
\item  
${\Pr}_g(H,\Aut(K))\geq \frac {|L(H,\Aut(K))|}{|H|} +  \frac {|C_{\Aut(K)}(H)|(|H|-|L(H,\Aut(K))|)}{|H||\Aut(K)|}$ if $g = 1$.
\item  
${\Pr}_g(H,\Aut(K))\geq \frac {|L(H,\Aut(K))||C_{\Aut(K)}(H)|}{|H||\Aut(K)|}$ if $g \neq 1$.
\end{enumerate}
\end{proposition}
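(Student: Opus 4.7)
My plan is to give, in each case, a lower bound on the cardinality of the autocommutator preimage $\{(x,\alpha) \in H \times \Aut(K) : [x,\alpha] = g\}$ by exhibiting an explicit family of solutions, and then divide by $|H||\Aut(K)|$ per the definition \eqref{GenAutComDeg}. The natural ``free'' solutions come from the two subgroups already featured in the introduction: $L(H,\Aut(K))$, every element of which is fixed by every $\alpha \in \Aut(K)$, and $C_{\Aut(K)}(H)$, every element of which fixes every point of $H$.

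For part (a), where $g = 1$, I would split the sum $\sum_{x \in H} |C_{\Aut(K)}(x)|$ from Corollary~\ref{formula2} according to whether $x$ belongs to $L(H,\Aut(K))$. For $x \in L(H,\Aut(K))$ we have $C_{\Aut(K)}(x) = \Aut(K)$, contributing $|L(H,\Aut(K))| \cdot |\Aut(K)|$ in total. For the remaining $|H| - |L(H,\Aut(K))|$ elements of $H$, the inclusion $C_{\Aut(K)}(H) \subseteq C_{\Aut(K)}(x)$ gives at least $|C_{\Aut(K)}(H)|$ each. Adding these and dividing by $|H||\Aut(K)|$ produces the stated inequality.

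For part (b), with $g \neq 1$, the standing assumption $g \in S(H,\Aut(K))$ furnishes some pair $(x_0,\alpha_0) \in H \times \Aut(K)$ with $[x_0,\alpha_0] = g$. I would then show that for every $y \in L(H,\Aut(K))$ and every $\beta \in C_{\Aut(K)}(H)$ the pair $(x_0 y, \alpha_0 \beta)$ also satisfies the autocommutator equation, via the computation
\[
[x_0 y, \alpha_0 \beta] = (x_0 y)^{-1} \alpha_0(\beta(x_0 y)) = (x_0 y)^{-1} \alpha_0(x_0 y) = y^{-1} x_0^{-1} \alpha_0(x_0) \alpha_0(y) = y^{-1} g y = g.
\]
The second equality uses that $\beta$ fixes $H$ pointwise together with $x_0 y \in H$; the fourth uses that $\alpha_0$ fixes the element $y \in L(H,\Aut(K))$; and the last uses the inclusion $L(H,\Aut(K)) \subseteq H \cap Z(K)$ noted in the introduction, so that $y$ centralizes $g \in K$. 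Since the maps $y \mapsto x_0 y$ on $L(H,\Aut(K))$ and $\beta \mapsto \alpha_0 \beta$ on $C_{\Aut(K)}(H)$ are injective, this produces $|L(H,\Aut(K))| \cdot |C_{\Aut(K)}(H)|$ distinct solution pairs, and dividing through yields the asserted bound.

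The only delicate point in the whole argument is the final equality in the displayed calculation, which silently invokes $L(H,\Aut(K)) \subseteq Z(K)$; everything else is a direct counting argument that poses no real obstacle.
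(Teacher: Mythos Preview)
Your proposal is correct and follows essentially the same approach as the paper. For part (a) the paper bounds $|\mathcal{C}|$ directly via inclusion--exclusion on the union $(L(H,\Aut(K))\times\Aut(K))\cup(H\times C_{\Aut(K)}(H))$, whereas you split the sum $\sum_{x\in H}|C_{\Aut(K)}(x)|$ from Corollary~\ref{formula2} according to whether $x\in L(H,\Aut(K))$; these are two phrasings of the same count. For part (b) the paper simply asserts that the coset $(x_0,\alpha_0)\bigl(L(H,\Aut(K))\times C_{\Aut(K)}(H)\bigr)$ lies in $\mathcal{C}$, while you supply the explicit verification --- including the step $y^{-1}gy=g$ via $L(H,\Aut(K))\subseteq Z(K)$, which the paper leaves implicit.
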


\begin{proof}
Let $\mathcal{C}$ denote the set  $\{(x,\alpha) \in H\times \Aut(K): [x,\alpha] = g\}$.

(a) We have $(L(H,\Aut(K)) \times \Aut(K))\cup (H \times C_{\Aut(K)}(H))$ is a subset of $\mathcal{C}$ and
$|(L(H,\Aut(K)) \times \Aut(K))\cup (H \times C_{\Aut(K)}(H))| =|L(H,\Aut(K))||\Aut(K)| + |C_{\Aut(K)}(H)||H| - |L(H,\Aut(K))||C_{\Aut(K)}(H)|$. Hence, the result follows from \eqref{GenAutComDeg}.

(b) Since $g\in S(H,\Aut (K))$ we have $\mathcal{C}$ is non-empty. Let $(y, \beta) \in \mathcal{C}$ then $(y, \beta) \notin  L(H,\Aut(K))\times C_{\Aut(K)}(H)$ otherwise $[y, \beta] = 1$. It is easy to see that the coset $(y, \beta)(L(H,\Aut(K))\times C_{\Aut(K)}(H))$ is a subset of $\mathcal{C}$ having order $|L(H,\Aut(K))||C_{\Aut(K)}(H)|$. Hence, the result follows from \eqref{GenAutComDeg}.
\end{proof}

\begin{proposition}\label{prop3.2}
Let $H$ and $K$ be two subgroups of a finite group $G$ such that $H \subseteq K$. If $g \in K$ then
\[
{\Pr}_g(H,\Aut(K)) \leq \Pr(H,\Aut(K)).
\]
The equality holds if and only if $g = 1$.
\end{proposition}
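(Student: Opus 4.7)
The plan is to read off both probabilities from the formulae already established and compare them termwise. By Theorem \ref{thm2.4},
\[
\Pr_g(H,\Aut(K)) = \frac{1}{|H||\Aut(K)|}\sum_{\substack{x \in H \\ xg \in \orb_K(x)}}|C_{\Aut(K)}(x)|,
\]
while Corollary \ref{formula2} gives
\[
\Pr(H,\Aut(K)) = \frac{1}{|H||\Aut(K)|}\sum_{x \in H}|C_{\Aut(K)}(x)|.
\]
First I would point out that the index set of the first sum is contained in that of the second, and every summand $|C_{\Aut(K)}(x)|$ is a positive integer (it always contains the identity automorphism). The desired inequality then falls out immediately.

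For the equality clause, I would argue both directions directly. If $g = 1$ then $xg = x \in \orb_K(x)$ for every $x \in H$, so the index sets coincide and the two sums are equal; this matches the expected ``${\Pr}_1 = \Pr$'' statement. Conversely, if equality holds then every term of the larger sum must appear in the smaller one, i.e.\ $xg \in \orb_K(x)$ for \emph{every} $x \in H$. Specialising to $x = 1$, one has $\orb_K(1) = \{1\}$ since automorphisms fix the identity, forcing $g = 1$.

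There is no real obstacle here; the only subtlety is to notice that the reduction to $x = 1$ in the equality direction is legitimate (because $1 \in H$) and that $|C_{\Aut(K)}(x)| \geq 1$ so no term of the larger sum can vanish and be harmlessly dropped. Once these two points are made, the proof is essentially a one-line comparison of the two expressions above.
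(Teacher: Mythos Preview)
Your proof is correct and follows essentially the same approach as the paper: compare the two sums from Theorem~\ref{thm2.4} and Corollary~\ref{formula2}, use positivity of the summands for the inequality, and characterise equality by the condition $xg\in\orb_K(x)$ for all $x\in H$. In fact your equality argument is slightly more explicit than the paper's, which simply asserts the equivalence ``$xg\in\orb_K(x)$ for all $x\in H$ iff $g=1$'' without spelling out the specialisation to $x=1$.
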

\begin{proof}
By Theorem \ref{thm2.4}, we have
\begin{align*}
{\Pr}_g(H, \Aut(K)) &= \frac {1}{|H||\Aut(K)|}\underset{xg\in \orb_K(x)}{\underset{x \in H}{\sum}}|C_{\Aut(K)}(x)|\\
&\leq \frac {1}{|H||\Aut(K)|}\underset{x \in H}{\sum}|C_{\Aut(K)}(x)| = \Pr(H,\Aut(K)).
\end{align*}
The equality holds if and only if $xg\in \orb_K(x)$ for all $x \in H$ if and only if $g = 1$.
\end{proof}

\begin{proposition}
Let $H$ and $K$ be two subgroups of a finite group $G$ such that $H \subseteq K$. Let $g \in K$ and $p$ the smallest prime dividing $|\Aut(K)|$. If  $g \neq 1$ then
\[
{\Pr}_g(H, \Aut(K))\leq \frac {|H| - |L(H,\Aut(K))|}{p|H|} < \frac {1}{p}.
\]
\end{proposition}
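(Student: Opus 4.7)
The plan is to invoke the formula of Theorem \ref{thm2.4} and split the sum over $H$ into two parts: those $x$ lying in $L(H,\Aut(K))$ and those outside. For $x \in L(H,\Aut(K))$ every $\alpha \in \Aut(K)$ fixes $x$, so $[x,\alpha]=1 \ne g$; equivalently $xg \notin \orb_K(x)$, and such $x$ contribute nothing to the sum. Hence
\[
{\Pr}_g(H,\Aut(K)) = \frac{1}{|H||\Aut(K)|}\underset{xg \in \orb_K(x)}{\underset{x \in H\setminus L(H,\Aut(K))}{\sum}}|C_{\Aut(K)}(x)|.
\]

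Next I would bound $|C_{\Aut(K)}(x)|$ for each $x \in H \setminus L(H,\Aut(K))$. By definition of $L(H,\Aut(K))$, such an $x$ has some automorphism not fixing it, so $C_{\Aut(K)}(x)$ is a \emph{proper} subgroup of $\Aut(K)$. By Lagrange the index $[\Aut(K):C_{\Aut(K)}(x)]$ is a divisor of $|\Aut(K)|$ greater than $1$, and so is at least $p$, the smallest prime divisor of $|\Aut(K)|$. Therefore $|C_{\Aut(K)}(x)| \le |\Aut(K)|/p$ for every $x$ in the sum.

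Substituting this bound, the number of summands is at most $|H|-|L(H,\Aut(K))|$, and each summand is at most $|\Aut(K)|/p$, yielding
\[
{\Pr}_g(H,\Aut(K)) \le \frac{1}{|H||\Aut(K)|}\cdot (|H|-|L(H,\Aut(K))|)\cdot \frac{|\Aut(K)|}{p} = \frac{|H|-|L(H,\Aut(K))|}{p|H|}.
\]
Finally, since $L(H,\Aut(K))$ is a subgroup of $H$ it contains the identity, so $|L(H,\Aut(K))| \ge 1$, which gives the strict inequality $\frac{|H|-|L(H,\Aut(K))|}{p|H|} < \frac{1}{p}$. There is no real obstacle here; the only point requiring care is the observation that elements of $L(H,\Aut(K))$ must be excluded from the outset, which is precisely what makes the hypothesis $g \ne 1$ usable to force properness of $C_{\Aut(K)}(x)$.
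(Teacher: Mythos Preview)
Your proof is correct and follows essentially the same approach as the paper. The only cosmetic difference is that the paper uses the orbit formulation of Theorem~\ref{thm2.4} (bounding $|\orb_K(x)| \ge p$) whereas you use the dual stabilizer formulation (bounding $|C_{\Aut(K)}(x)| \le |\Aut(K)|/p$); by the orbit--stabilizer relation \eqref{orbit-stabilizer-Thm} these are the same estimate.
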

\begin{proof}
By Theorem \ref{thm2.4}, we have
\begin{equation}\label{eq3.1}
{\Pr}_g(H, \Aut(K)) = \frac {1}{|H|}\underset{xg\in \orb_K(x)}{\underset{x\in H \setminus L(H,\Aut(K))}{\sum}}\frac {1}{|\orb_K(x)|}
\end{equation}
noting that for $x \in L(H,\Aut(K))$ we have $xg \notin \orb_K(x)$. Also, for $x\in H \setminus L(H,\Aut(K))$ and $xg \in \orb_K(x)$ we have $|\orb_K(x)| > 1$. Since $|\orb_K(x)|$ is a divisor of $|\Aut(K)|$ we have $|\orb_K(x)| \geq p$. Hence, the result follows from \eqref{eq3.1}.
\end{proof}

\begin{proposition}
Let $H_1$, $H_2$ and $K$ be subgroups of a finite group $G$ such that $H_1 \subseteq H_2 \subseteq K$. Then 
\[
{\Pr}_g(H_1,\Aut(K))\leq |H_2:H_1|{\Pr}_g(H_2,\Aut(K)).
\]
The equality holds if and only if $xg \notin \orb_K(x)$ for all $x \in H_2\setminus H_1$.
\end{proposition}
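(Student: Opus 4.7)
The plan is to invoke the second computing formula from Theorem \ref{thm2.4} for both $H_1$ and $H_2$, and then compare the two sums term-by-term using the containment $H_1 \subseteq H_2$.

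First, I apply Theorem \ref{thm2.4} to both subgroups to write
\[
|H_i| \, {\Pr}_g(H_i, \Aut(K)) = \underset{xg \in \orb_K(x)}{\underset{x \in H_i}{\sum}} \frac{1}{|\orb_K(x)|} \quad (i = 1, 2).
\]
Since $H_1 \subseteq H_2$, the sum on the right for $i = 1$ is a partial sum of the one for $i = 2$, and every summand $\frac{1}{|\orb_K(x)|}$ is strictly positive. Consequently
\[
|H_1| \, {\Pr}_g(H_1, \Aut(K)) \le |H_2| \, {\Pr}_g(H_2, \Aut(K)),
\]
and dividing by $|H_1|$ yields the claimed inequality, since $|H_2|/|H_1| = |H_2 : H_1|$.

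For the equality case, the two sums are equal precisely when the contributions from $x \in H_2 \setminus H_1$ with $xg \in \orb_K(x)$ all vanish. Because each such contribution is strictly positive, equality forces the index set $\{x \in H_2 \setminus H_1 : xg \in \orb_K(x)\}$ to be empty, i.e.\ $xg \notin \orb_K(x)$ for every $x \in H_2 \setminus H_1$. Conversely, if this condition holds, then the two sums coincide and equality is immediate. No real obstacle is anticipated; the only thing to be careful about is making the positivity argument for the equality characterization explicit, since the inequality itself is just monotonicity of a sum of non-negative terms under enlarging the index set.
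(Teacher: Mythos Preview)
Your proof is correct and follows essentially the same approach as the paper: the paper uses the first formula from Theorem~\ref{thm2.4} (summing $|C_{\Aut(K)}(x)|$) whereas you use the second (summing $1/|\orb_K(x)|$), but by \eqref{orbit-stabilizer-Thm} these are equivalent term-by-term, and the comparison argument and equality analysis are identical.
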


\begin{proof}
By Theorem \ref{thm2.4}, we have
\begin{align*}
|H_1||\Aut(K)|{\Pr}_g(H_1,\Aut(K))&= \underset{xg \in \orb_K(x)}{\underset{x\in H_1}{\sum}}|C_{\Aut(K)}(x)|\\
&\leq \underset{xg \in \orb_K(x)}{\underset{x\in H_2}{\sum}}|C_{\Aut(K)}(x)|\\
&=|H_2||\Aut(K)|{\Pr}_g(H_2,\Aut(K)).
\end{align*}
Hence, the result follows.
\end{proof}

\begin{proposition}
Let $H$ and $K$ be two subgroups of a finite group $G$ such that $H \subseteq K$. If $g\in K$ then
\[
{\Pr}_g(H,\Aut(K))\leq |K : H|\Pr(K,\Aut(K))
\]
with  equality if and only if $g = 1$ and $H = K$.
\end{proposition}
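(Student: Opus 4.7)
The plan is to decompose the inequality into two chained pieces, each of which has already been set up by the preceding results in the section. First, by Proposition \ref{prop3.2}, we have
\[
{\Pr}_g(H,\Aut(K)) \leq {\Pr}(H,\Aut(K)),
\]
with equality precisely when $g=1$. So it remains to show ${\Pr}(H,\Aut(K)) \leq |K:H|\,{\Pr}(K,\Aut(K))$ with equality precisely when $H=K$.

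To establish this second inequality, I would invoke the first computing formula in Corollary \ref{formula2}, applied to the pair $(H,K)$ and again to the pair $(K,K)$:
\[
|H||\Aut(K)|\,{\Pr}(H,\Aut(K)) = \underset{x\in H}{\sum}|C_{\Aut(K)}(x)|,
\]
\[
|K||\Aut(K)|\,{\Pr}(K,\Aut(K)) = \underset{x\in K}{\sum}|C_{\Aut(K)}(x)|.
\]
Since $H\subseteq K$ and every term $|C_{\Aut(K)}(x)|$ is a positive integer, dropping the summands indexed by $x\in K\setminus H$ can only decrease the sum, giving
\[
|H||\Aut(K)|\,{\Pr}(H,\Aut(K)) \leq |K||\Aut(K)|\,{\Pr}(K,\Aut(K)),
\]
from which dividing by $|H||\Aut(K)|$ yields ${\Pr}(H,\Aut(K)) \leq |K:H|\,{\Pr}(K,\Aut(K))$.

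Finally, I would read off the equality case. Equality in the second step forces $\sum_{x\in K\setminus H}|C_{\Aut(K)}(x)|=0$, and since each centralizer contains the identity automorphism, this is possible only when $K\setminus H=\emptyset$, i.e.\ $H=K$. Combined with the equality condition $g=1$ from Proposition \ref{prop3.2}, we obtain that equality in the full chain holds if and only if $g=1$ and $H=K$. I do not foresee a real obstacle here; the only point that needs care is the bookkeeping on equality, which is immediate from positivity of the centralizer orders.
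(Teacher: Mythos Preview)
Your proof is correct and follows essentially the same route as the paper: first reduce to $g=1$ via Proposition~\ref{prop3.2}, then compare the sums $\sum_{x\in H}|C_{\Aut(K)}(x)|$ and $\sum_{x\in K}|C_{\Aut(K)}(x)|$ using Corollary~\ref{formula2}. Your treatment of the equality case is in fact more explicit than the paper's, which simply asserts it.
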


\begin{proof}
By Proposition \ref{prop3.2}, we have
\begin{align*}
{\Pr}_g(H,\Aut(K)) & \leq \Pr(H,\Aut(K))\\
&=\frac {1}{|H||\Aut(K)|}\underset{x\in H}{\sum}|C_{\Aut(K)}(x)|\\
&\leq \frac {1}{|H||\Aut(K)|}\underset{x\in K}{\sum}|C_{\Aut(K)}(x)|.
\end{align*}
Hence, the result follows from Corollary \ref{formula2}.
\end{proof}

Note that if we replace $\Aut(K)$ by $\Inn(K)$, the inner automorphism group of $K$, in \eqref{GenAutComDeg} then ${\Pr}_g(H,\Inn(K)) = {\Pr}_g(H,K)$ where 
\[
{\Pr}_g(H,K) = \frac{|\{(x, y) \in H \times K : x^{-1}y^{-1}xy = g\}|}{|H||K|}.
\]
A detailed study on ${\Pr}_g(H,K)$ can be found in \cite{DN10}. The following proposition gives a relation between ${\Pr}_g(H,\Aut(K))$ and ${\Pr}_g(H,K)$ for $g = 1$.
\begin{proposition}
Let $H$ and $K$ be two subgroups of a finite group $G$ such that $H \subseteq K$. If $g = 1$ then 
\[
{\Pr}_g(H,\Aut(K)) \leq {\Pr}_g(H,K).
\]
\end{proposition}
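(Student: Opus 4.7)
The plan is to rewrite both probabilities as sums of reciprocal orbit sizes and then invoke $\Inn(K) \le \Aut(K)$ to get the inequality term by term.

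First, I apply Theorem \ref{thm2.4} with $g = 1$: the condition $xg \in \orb_K(x)$ becomes $x \in \orb_K(x)$, which is automatic, so
\[
\Pr(H,\Aut(K)) = \frac{1}{|H|} \sum_{x \in H} \frac{1}{|\orb_K(x)|},
\]
where $\orb_K(x)$ is the $\Aut(K)$-orbit of $x$ in $K$.

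Second, I rewrite $\Pr(H,K)$ by a direct count. From its definition,
\[
|H||K|\Pr(H,K) = \sum_{x \in H}|C_K(x)|,
\]
and the orbit-stabilizer theorem applied to the conjugation action of $K$ on itself gives $|C_K(x)| = |K|/|\cl_K(x)|$, where $\cl_K(x)$ denotes the $K$-conjugacy class of $x$. Therefore
\[
\Pr(H,K) = \frac{1}{|H|} \sum_{x \in H}\frac{1}{|\cl_K(x)|}.
\]

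Third, the key observation is that for each $y \in K$ the inner automorphism $\phi_y : z \mapsto y^{-1}zy$ belongs to $\Aut(K)$, so $\cl_K(x) = \{\phi_y(x) : y \in K\} \subseteq \{\alpha(x) : \alpha \in \Aut(K)\} = \orb_K(x)$. Hence $|\cl_K(x)| \le |\orb_K(x)|$ and $1/|\cl_K(x)| \ge 1/|\orb_K(x)|$ for every $x \in H$. Summing over $x \in H$ and dividing by $|H|$ produces the desired inequality.

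I do not foresee a substantive obstacle; the argument is a clean term-by-term comparison. The only point worth flagging is that the hypothesis $g = 1$ is essential: for arbitrary $g$, Theorem \ref{thm2.4} restricts the sum to $\{x \in H : xg \in \orb_K(x)\}$, and this index set can be strictly larger for the $\Aut(K)$-action than for the $\Inn(K)$-action (since more $\alpha$ are available to send $x$ to $xg$). Consequently the two effects (more terms on the $\Aut(K)$ side, but smaller individual reciprocals) pull in opposite directions, and a clean one-sided inequality is no longer available by this method.
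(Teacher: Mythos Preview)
Your proof is correct and follows essentially the same approach as the paper: both reduce to a term-by-term comparison of $\frac{1}{|\cl_K(x)|}$ versus $\frac{1}{|\orb_K(x)|}$ using the containment $\cl_K(x)\subseteq\orb_K(x)$. The only cosmetic difference is that the paper quotes the formula $\Pr(H,K)=\frac{1}{|H|}\sum_{x\in H}\frac{1}{|\cl_K(x)|}$ from \cite{DN10}, whereas you derive it directly via orbit--stabilizer.
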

\begin{proof}
If $g = 1$ then by  \cite[Theorem 2.3]{DN10}, we have 
\begin{equation}\label{newbound01}
{\Pr}_g(H, K) =   \frac {1}{|H|}\underset{x\in H}{\sum}\frac {1}{|\cl_K(x)|}
\end{equation}
where $\cl_K(x) = \{\alpha(x) : \alpha \in \Inn(K)\}$. Since $\cl_K(x) \subseteq \orb_K(x)$ for all $x \in H$, the result follows from \eqref{newbound01} and Theorem \ref{thm2.4}.     
\end{proof}
 
\begin{theorem}\label{thm3.6}
Let $H$ and $K$ be two subgroups of a finite group $G$ such that $H \subseteq K$ and $p$ the smallest prime dividing $|\Aut(K)|$.  Then
\[
\Pr(H,\Aut(K)) \geq \frac {|L(H,\Aut(K))|}{|H|} + \frac {p(|H| - |X_H| - |L(H,\Aut(K))|) + |X_H|}{|H||\Aut(K)|}
\]
and
\[
\Pr(H,\Aut(K)) \leq \frac {(p - 1)|L(H,\Aut(K))| + |H|}{p|H|} - \frac {|X_H|(|\Aut(K)| - p)}{p|H||\Aut(K)|},
\]
where $X_H = \{x \in H : C_{\Aut(K)}(x) = \{I\}\}$.
\end{theorem}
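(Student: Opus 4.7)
The natural starting point is Corollary \ref{formula2}, which gives
\[
|H||\Aut(K)|\,\Pr(H,\Aut(K)) \;=\; \sum_{x\in H}|C_{\Aut(K)}(x)|.
\]
The plan is to partition $H$ according to the size of $C_{\Aut(K)}(x)$ into three blocks whose contributions I can control: (i) $L(H,\Aut(K))$, on which $C_{\Aut(K)}(x)=\Aut(K)$, contributing $|L(H,\Aut(K))|\cdot|\Aut(K)|$; (ii) $X_H$, on which $|C_{\Aut(K)}(x)|=1$, contributing $|X_H|$; and (iii) the remainder $R:=H\setminus(L(H,\Aut(K))\cup X_H)$, whose size is $|H|-|L(H,\Aut(K))|-|X_H|$.

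The key observation for $R$ is the following. For $x\in R$, the subgroup $C_{\Aut(K)}(x)$ of $\Aut(K)$ is a proper, nontrivial subgroup, so by Lagrange its order is a proper nontrivial divisor of $|\Aut(K)|$. Since $p$ is the smallest prime dividing $|\Aut(K)|$, this forces
\[
p \;\le\; |C_{\Aut(K)}(x)| \;\le\; \frac{|\Aut(K)|}{p} \qquad \text{for every } x\in R.
\]
(Note that $L(H,\Aut(K))\cap X_H=\emptyset$ whenever $|\Aut(K)|>1$, which is guaranteed by the existence of $p$, so the three-block partition is well defined.)

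From here both bounds drop out by summing. For the lower bound I replace $|C_{\Aut(K)}(x)|$ by $p$ on $R$, giving
\[
\sum_{x\in H}|C_{\Aut(K)}(x)| \;\ge\; |L(H,\Aut(K))|\,|\Aut(K)| + |X_H| + p\bigl(|H|-|X_H|-|L(H,\Aut(K))|\bigr),
\]
and dividing by $|H||\Aut(K)|$ yields exactly the stated lower estimate. For the upper bound I replace $|C_{\Aut(K)}(x)|$ by $|\Aut(K)|/p$ on $R$ and simplify: the three contributions combine to
\[
\frac{|L(H,\Aut(K))|}{|H|}+\frac{|X_H|}{|H||\Aut(K)|}+\frac{|H|-|L(H,\Aut(K))|-|X_H|}{p|H|},
\]
which after collecting the $|L(H,\Aut(K))|$ terms over the common denominator $p|H|$ and the $|X_H|$ terms over $p|H||\Aut(K)|$ rearranges into the claimed expression $\bigl((p-1)|L(H,\Aut(K))|+|H|\bigr)/(p|H|) - |X_H|(|\Aut(K)|-p)/(p|H||\Aut(K)|)$.

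There is no serious obstacle here: once the three-block decomposition of $H$ and the two-sided bound $p\le|C_{\Aut(K)}(x)|\le|\Aut(K)|/p$ on $R$ are in hand, the proof is a routine computation. The only minor care required is in the algebraic rearrangement of the upper bound to match the stated form, particularly in separating the $|X_H|$ contribution so that it appears as the single subtracted term $|X_H|(|\Aut(K)|-p)/(p|H||\Aut(K)|)$.
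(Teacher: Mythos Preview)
Your proof is correct and follows essentially the same approach as the paper's: partition $H$ into $L(H,\Aut(K))$, $X_H$, and the remainder, use Lagrange to sandwich $|C_{\Aut(K)}(x)|$ between $p$ and $|\Aut(K)|/p$ on the remainder, and sum. The only difference is that you carry out the algebraic rearrangement of the upper bound explicitly, whereas the paper leaves the passage from \eqref{our_bound2} to the stated form to the reader.
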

\begin{proof}
We have   $X_H \cap L(H,\Aut(K)) = \phi$. Therefore
\begin{align*}
\underset{x\in H}{\sum}|C_{\Aut(K)}(x)| = & \, |X_H| + |\Aut(K)||L(H,\Aut(K))| \\
& + \underset{x\in H\setminus (X_H\cup L(H,\Aut(K)))}{\sum}|C_{\Aut(K)}(x)|.
\end{align*}

For $x  \in H \setminus (X_H \cup L(H,\Aut(K)))$ we have $\{I\}\neq C_{\Aut(K)}(x)\neq \Aut(K)$ which implies $p \leq |C_{\Aut(K)}(x)|\leq \frac {|\Aut(K)|}{p}$. Therefore
\begin{align}\label{our_bound1}
\underset{x\in H}{\sum}|C_{\Aut(K)}(x)| \geq & |X_H| + |\Aut(K)||L(H,\Aut(K))|\nonumber \\
& + p(|H| - |X_H| - |L(H,\Aut(K))|)
\end{align}
and
\begin{align}\label{our_bound2}
\underset{x\in H}{\sum}|C_{\Aut(K)}(x)| \leq & |X_H| + |\Aut(K)||L(H,\Aut(K))| \nonumber\\
& + \frac{|\Aut(K)|(|H| - |X_H| - |L(H,\Aut(K))|)}{p}.
\end{align}
Hence, the result follows from Corollary \ref{formula2}, \eqref{our_bound1} and \eqref{our_bound2}.
\end{proof}

\noindent We have the following two corollaries.
\begin{corollary}\label{bound_like3/4}
Let $H$ and $K$ be two subgroups of a finite group $G$ such that $H \subseteq K$. If $p$ and $q$ are the smallest primes dividing  $|\Aut(K)|$ and $|H|$ respectively then
\[
\Pr(H,\Aut(K)) \leq \frac{p + q - 1}{pq}.
\]
In particular, if $p = q$ then $\Pr(H,\Aut(K)) \leq \frac{2p - 1}{p^2} \leq \frac{3}{4}$.
\end{corollary}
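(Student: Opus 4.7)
The plan is to derive Corollary \ref{bound_like3/4} as a direct consequence of the upper bound in Theorem \ref{thm3.6}, by discarding the negative term and then estimating $|L(H,\Aut(K))|/|H|$ from above.

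First, observe that in the upper bound
\[
\Pr(H,\Aut(K)) \leq \frac {(p - 1)|L(H,\Aut(K))| + |H|}{p|H|} - \frac {|X_H|(|\Aut(K)| - p)}{p|H||\Aut(K)|},
\]
the subtracted term is non-negative: $|X_H| \geq 0$ and $|\Aut(K)| \geq p$ by the definition of $p$. So I would simply drop it, obtaining
\[
\Pr(H,\Aut(K)) \leq \frac{p-1}{p}\cdot\frac{|L(H,\Aut(K))|}{|H|} + \frac{1}{p}.
\]

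Next I use the standing assumption (made in the introduction) that $H \ne L(H,\Aut(K))$. Since $L(H,\Aut(K))$ is a proper subgroup of $H$, its index $|H : L(H,\Aut(K))|$ is at least the smallest prime dividing $|H|$, which is $q$. Hence $|L(H,\Aut(K))|/|H| \leq 1/q$, and plugging in gives
\[
\Pr(H,\Aut(K)) \leq \frac{p-1}{pq} + \frac{1}{p} = \frac{p + q - 1}{pq},
\]
which is the asserted bound.

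For the special case $p = q$, direct substitution yields $\Pr(H,\Aut(K)) \leq (2p-1)/p^2$. It remains to check that $(2p-1)/p^2 \leq 3/4$ for every prime $p \geq 2$. This is equivalent to $3p^2 - 8p + 4 \geq 0$, i.e.\ $(3p - 2)(p - 2) \geq 0$, which clearly holds for $p \geq 2$ with equality at $p = 2$. No step is really an obstacle here; the only thing one has to be careful about is recalling the standing convention $H \ne L(H,\Aut(K))$, without which the estimate $|L(H,\Aut(K))|/|H| \leq 1/q$ would fail.
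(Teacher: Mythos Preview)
Your proof is correct and follows essentially the same route as the paper: both drop the non-negative $X_H$ term from the upper bound of Theorem \ref{thm3.6} to obtain $\Pr(H,\Aut(K)) \le \frac{1}{p}\bigl(\frac{p-1}{|H:L(H,\Aut(K))|}+1\bigr)$, and then use the standing assumption $H \ne L(H,\Aut(K))$ to bound the index below by $q$. Your write-up simply makes explicit the step of discarding the $X_H$ term and the verification that $(2p-1)/p^2 \le 3/4$, which the paper leaves implicit.
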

\begin{proof}
Since $H \ne L(H,\Aut(K))$ we have $|H : L(H,\Aut(K))| \geq q$. Therefore, by Theorem \ref{thm3.6}, we have
\[
\Pr(H,\Aut(K)) \leq \frac{1}{p}\left(\frac{p - 1}{|H : L(H,\Aut(K))|} + 1\right) \leq \frac{p + q - 1}{pq}.
\]
\end{proof}

\begin{corollary}\label{bound_like5/8}
Let $H$ and $K$ be two subgroups of a finite group $G$ such that $H \subseteq K$ and $p$, $q$ be the smallest primes dividing  $|\Aut(K)|$ and $|H|$ respectively. If $H$ is non-abelian then
\[
\Pr(H,\Aut(K)) \leq \frac{q^2 + p - 1}{pq^2}.
\]
In particular, if $p = q$ then $\Pr(H,\Aut(K)) \leq \frac{p^2 + p - 1}{p^3} \leq \frac{5}{8}$.
\end{corollary}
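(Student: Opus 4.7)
The plan is to strengthen Corollary \ref{bound_like3/4} by replacing the estimate $|H:L(H,\Aut(K))|\geq q$ with the sharper estimate $|H:L(H,\Aut(K))|\geq q^2$, which becomes available as soon as $H$ is non-abelian. The introduction already records $L(H,\Aut(K))\subseteq H\cap Z(K)$; since $H\subseteq K$, every element of $H\cap Z(K)$ commutes with every element of $H$, so $H\cap Z(K)\subseteq Z(H)$. Consequently $L(H,\Aut(K))\subseteq Z(H)$, and the index $|H:Z(H)|$ divides $|H:L(H,\Aut(K))|$.

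Next I would invoke the classical fact that a non-abelian group cannot have a cyclic central quotient. Thus $H/Z(H)$ is neither trivial nor of prime order. Since every prime divisor of $|H/Z(H)|$ divides $|H|$, it is at least $q$, so $|H/Z(H)|\geq q^2$. Combining this with the previous step yields
\[
|H:L(H,\Aut(K))|\geq|H:Z(H)|\geq q^{2}.
\]

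With this inequality in hand, I would apply Theorem \ref{thm3.6}. Dropping the non-positive correction term involving $|X_H|$ (note $|\Aut(K)|\geq p$), that theorem gives
\[
\Pr(H,\Aut(K))\leq\frac{(p-1)|L(H,\Aut(K))|+|H|}{p|H|}=\frac{1}{p}\left(\frac{p-1}{|H:L(H,\Aut(K))|}+1\right)\leq\frac{p-1}{pq^{2}}+\frac{1}{p}=\frac{q^{2}+p-1}{pq^{2}},
\]
which is the stated bound. For the specialization $p=q$ the right-hand side becomes $\frac{p^{2}+p-1}{p^{3}}=\frac{1}{p}+\frac{1}{p^{2}}-\frac{1}{p^{3}}$; each summand is decreasing in $p$ for $p\geq 2$, so the maximum over primes is attained at $p=2$, giving exactly $\frac{5}{8}$.

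I do not anticipate a substantive obstacle: the entire argument hinges on the containment $L(H,\Aut(K))\subseteq Z(H)$, which turns the corollary into a direct consequence of the non-cyclicity of $H/Z(H)$ for non-abelian $H$ together with the bound already established in Theorem \ref{thm3.6}. The only mild care needed is to verify that discarding the $|X_H|$ correction in Theorem \ref{thm3.6} preserves the inequality in the correct direction, which is immediate.
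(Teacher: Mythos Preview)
Your proposal is correct and follows essentially the same route as the paper: establish $|H:L(H,\Aut(K))|\geq q^{2}$ and then invoke the upper bound of Theorem~\ref{thm3.6}. In fact you supply the justification (via $L(H,\Aut(K))\subseteq Z(H)$ and non-cyclicity of $H/Z(H)$) that the paper merely asserts in one line.
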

\begin{proof}
Since $H$ is non-abelian we have $|H : L(H,\Aut(K))| \geq q^2$. Therefore, by Theorem \ref{thm3.6}, we have
\[
\Pr(H,\Aut(K)) \leq \frac{1}{p}\left(\frac{p - 1}{|H : L(H,\Aut(K))|} + 1\right) \leq \frac{q^2 + p - 1}{pq^2}.
\]
\end{proof}
Now we obtain two lower bounds analogous to the lower bounds obtained in  \cite[Theorem A]{nY15} and \cite[Theorem 1]{ND10}.
\begin{theorem}\label{prop3.8}
Let $H$ and $K$ be two subgroups of a finite group $G$ such that $H \subseteq K$. Then
\[
\Pr(H,\Aut(K)) \geq \frac {1}{|S(H,\Aut(K))|}\left(1 + \frac {|S(H,\Aut(K))| - 1}{|H : L(H,\Aut(K))|}\right).
\]
The equality holds if and only if $\orb_K(x) = xS(H,\Aut(K))$
 for all $x \in H \setminus L(H,\Aut(K))$.
\end{theorem}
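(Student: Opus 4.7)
My plan is to rewrite $\Pr(H,\Aut(K))$ as a sum of reciprocals of orbit sizes and then bound each summand from below. Combining the first equality in Corollary \ref{formula2} with the orbit-stabilizer identity \eqref{orbit-stabilizer-Thm} immediately gives
\[
\Pr(H,\Aut(K)) = \frac{1}{|H|}\sum_{x\in H}\frac{1}{|\orb_K(x)|}.
\]
I would split the sum according to the decomposition $H = L(H,\Aut(K)) \sqcup \bigl(H\setminus L(H,\Aut(K))\bigr)$. For every $x \in L(H,\Aut(K))$ the definition forces $\alpha(x)=x$ for all $\alpha\in\Aut(K)$, so $\orb_K(x)=\{x\}$ and the contribution is exactly $|L(H,\Aut(K))|$.

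The crucial step is controlling $|\orb_K(x)|$ on the complement. Here I would use the identity $\alpha(x) = x\cdot[x,\alpha]$, valid for every $\alpha\in\Aut(K)$ and every $x\in H$. Since $[x,\alpha]\in S(H,\Aut(K))$ by definition of that set, it follows that
\[
\orb_K(x)\subseteq x\cdot S(H,\Aut(K)) \quad\text{and hence}\quad |\orb_K(x)| \leq |S(H,\Aut(K))|
\]
for every $x\in H$. This yields $\frac{1}{|\orb_K(x)|}\geq \frac{1}{|S(H,\Aut(K))|}$ on $H\setminus L(H,\Aut(K))$, and summing gives
\[
\sum_{x\in H}\frac{1}{|\orb_K(x)|} \geq |L(H,\Aut(K))| + \frac{|H|-|L(H,\Aut(K))|}{|S(H,\Aut(K))|}.
\]
Dividing by $|H|$ and rearranging over the common denominator $|S(H,\Aut(K))|$ produces precisely the claimed lower bound, written in the form $\frac{1}{|S(H,\Aut(K))|}\bigl(1+\frac{|S(H,\Aut(K))|-1}{|H:L(H,\Aut(K))|}\bigr)$.

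For the equality characterization, the inequality used for $x\in H\setminus L(H,\Aut(K))$ is tight precisely when $|\orb_K(x)|=|S(H,\Aut(K))|$ for every such $x$. Since the set containment $\orb_K(x)\subseteq x\,S(H,\Aut(K))$ already holds and the right-hand side has cardinality $|S(H,\Aut(K))|$, matching cardinalities forces equality of sets, giving $\orb_K(x) = xS(H,\Aut(K))$ for all $x\in H\setminus L(H,\Aut(K))$; the converse is immediate by running the estimate backwards. I do not anticipate any real obstacle: the only non-routine ingredient is the simple but essential observation that the whole orbit $\orb_K(x)$ is trapped inside the \emph{coset} $xS(H,\Aut(K))$ of the autocommutator set, which is what allows $|S(H,\Aut(K))|$ (rather than a cruder quantity such as $|[H,\Aut(K)]|$) to appear in the denominator.
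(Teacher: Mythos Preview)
Your proof is correct and follows essentially the same route as the paper: express $\Pr(H,\Aut(K))$ as $\frac{1}{|H|}\sum_{x\in H}\frac{1}{|\orb_K(x)|}$, split off $L(H,\Aut(K))$, and bound the remaining terms using $\orb_K(x)\subseteq xS(H,\Aut(K))$. Your equality analysis is also the same (and in fact spelled out more fully than in the paper); the only cosmetic slip is calling $xS(H,\Aut(K))$ a ``coset'' when $S(H,\Aut(K))$ need not be a subgroup --- it is just a left translate --- but this does not affect the argument.
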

\begin{proof}
For all $x \in H \setminus L(H,\Aut(K))$ we have $\alpha (x) = x[x, \alpha] \in xS(H,\Aut(K))$. Therefore $\orb_K(x) \subseteq xS(H,\Aut(K))$ and so $|\orb_K(x)| \leq |S(H,\Aut(K))|$
 for all $x \in H \setminus L(H,\Aut(K))$. Now, by Corollary \ref{formula2}, we have
\begin{align*}
\Pr(H,\Aut(K))& = \frac {1}{|H|}\left(\underset{x \in L(H,\Aut(K))}{\sum}\frac {1}{|\orb_K(x)|} + \underset{x \in H \setminus L(H,\Aut(K))}{\sum}\frac {1}{|\orb_K(x)|}\right)\\
&\geq \frac {|L(H,\Aut(K))|}{|H|} + \frac {1}{|H|}\underset{x\in H\setminus L(H,\Aut(K))}{\sum}\frac{1}{|S(H,\Aut(K))|}.
\end{align*}
Hence, the result follows.
\end{proof}
\begin{lemma}\label{lemma4.4}
Let $H$ and $K$ be two subgroups of a finite group $G$ such that $H \subseteq K$. Then, for any two integers  $m \geq n$, we have
\[
\frac {1}{n}\left(1 + \frac{n - 1}{|H : L(H,\Aut(K))|}\right) \geq \frac {1}{m}\left(1 + \frac{m - 1}{|H : L(H,\Aut(K))|}\right).
\]
 If $L(H,\Aut(K))\neq H$ then equality holds if and only if $m=n$.
\end{lemma}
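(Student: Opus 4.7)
The plan is to reduce the inequality to a single-variable monotonicity statement and verify it by direct algebraic manipulation. Let me set $s := |H : L(H,\Aut(K))|$, which is a positive integer (and $\geq 2$ when $L(H,\Aut(K)) \neq H$). Define
\[
f(k) := \frac{1}{k}\left(1 + \frac{k-1}{s}\right) = \frac{1}{k} + \frac{1}{s} - \frac{1}{ks}
\]
for a positive integer $k$. The assertion is then simply that $f$ is non-increasing on the positive integers, with strict decrease whenever $s \geq 2$.

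The key step is to compute the difference $f(n) - f(m)$ directly. Using the expanded form above, the middle term $\frac{1}{s}$ cancels and one obtains
\[
f(n) - f(m) = \left(\frac{1}{n} - \frac{1}{m}\right) - \frac{1}{s}\left(\frac{1}{n} - \frac{1}{m}\right) = \left(\frac{1}{n} - \frac{1}{m}\right)\left(1 - \frac{1}{s}\right).
\]
Both factors are non-negative: the first because $m \geq n > 0$, and the second because $s \geq 1$. This yields the desired inequality.

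For the equality condition, notice that $f(n) - f(m) = 0$ if and only if at least one of the two factors vanishes, i.e., either $m = n$ or $s = 1$. The hypothesis $L(H,\Aut(K)) \neq H$ forces $s \geq 2$, ruling out the second alternative, so equality reduces to $m = n$. This closes the lemma.

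I do not anticipate any real obstacle here; the statement is essentially the elementary observation that $\frac{1}{k} + \frac{1}{s} - \frac{1}{ks}$ is decreasing in $k$ when $s > 1$, and the only care required is to keep track of the equality case when $s = 1$.
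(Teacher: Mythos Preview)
Your argument is correct. The paper itself offers no proof beyond declaring the lemma ``an easy exercise,'' and your factorization $f(n)-f(m)=\bigl(\tfrac{1}{n}-\tfrac{1}{m}\bigr)\bigl(1-\tfrac{1}{s}\bigr)$ is exactly the routine verification one would supply, with the equality case handled cleanly.
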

\begin{proof}
The proof is an easy exercise.
\end{proof}

\begin{corollary}\label{lastcor}
Let $H$ and $K$ be two subgroups of a finite group $G$ such that $H \subseteq K$. Then
\[
{\Pr}(H,\Aut(K))\geq \frac {1}{|[H,\Aut(K)]|}\left(1 + \frac {|[H,\Aut(K)]| - 1}{|H : L(H,\Aut(K))|}\right).
\]
If $H \ne L(H,\Aut(K))$ then the equality holds if and only if $[H,\Aut(K)] =  S(H,\Aut(K))$ and $\orb_K(x)= x[H,\Aut(K)]$
 for all $x \in H \setminus L(H,\Aut(K))$.
\end{corollary}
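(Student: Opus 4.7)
The plan is to derive this as a direct consequence of Theorem \ref{prop3.8} and Lemma \ref{lemma4.4}, so no fresh computation is required. By definition, $[H,\Aut(K)] = \langle S(H,\Aut(K))\rangle$, which contains $S(H,\Aut(K))$. Hence $|[H,\Aut(K)]| \geq |S(H,\Aut(K))|$, and I would apply Lemma \ref{lemma4.4} with $n = |S(H,\Aut(K))|$ and $m = |[H,\Aut(K)]|$ to obtain
\[
\frac{1}{|S(H,\Aut(K))|}\!\left(1 + \frac{|S(H,\Aut(K))| - 1}{|H : L(H,\Aut(K))|}\right) \geq \frac{1}{|[H,\Aut(K)]|}\!\left(1 + \frac{|[H,\Aut(K)]| - 1}{|H : L(H,\Aut(K))|}\right).
\]
Chaining this with the lower bound from Theorem \ref{prop3.8} immediately gives the stated inequality.

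For the equality characterization under the hypothesis $H \ne L(H,\Aut(K))$, I would track the two inequalities separately. Equality in the step coming from Lemma \ref{lemma4.4} forces $m = n$, i.e.\ $|S(H,\Aut(K))| = |[H,\Aut(K)]|$; combined with $S(H,\Aut(K)) \subseteq [H,\Aut(K)]$, this yields $S(H,\Aut(K)) = [H,\Aut(K)]$. Equality in Theorem \ref{prop3.8} further demands $\orb_K(x) = xS(H,\Aut(K))$ for every $x \in H \setminus L(H,\Aut(K))$, which together with the previous conclusion becomes $\orb_K(x) = x[H,\Aut(K)]$. Conversely, if both conditions hold, substituting $[H,\Aut(K)] = S(H,\Aut(K))$ into the equality statement of Theorem \ref{prop3.8} produces equality throughout.

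I do not expect any real obstacle; the only subtlety is being careful that the equality case requires \emph{both} of the two equality conditions from the source results, and that the assumption $H \ne L(H,\Aut(K))$ is precisely what guarantees $m = n$ is necessary for equality in Lemma \ref{lemma4.4} (otherwise the two sides trivially agree as $\tfrac{1}{n} + 0$).
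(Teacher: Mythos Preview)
Your proposal is correct and follows exactly the paper's approach: use $|[H,\Aut(K)]|\ge |S(H,\Aut(K))|$, apply Lemma~\ref{lemma4.4}, and chain with Theorem~\ref{prop3.8}, tracking equality in both to obtain the characterization. The paper's proof is more terse but substantively identical.
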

\begin{proof}
Since $|[H,\Aut(K)]|  \geq |S(H, \,\Aut(K))|$, the result follows from Theorem \ref{prop3.8} and Lemma \ref{lemma4.4}.

Note that the equality holds if and only if equality holds
in Theorem \ref{prop3.8} and Lemma \ref{lemma4.4}.
\end{proof}
It is worth mentioning that  Theorem \ref{prop3.8} gives better  lower bound  than the lower bound given by Corollary \ref{lastcor}. Also
\begin{align*}
 \frac {1}{|[H,\Aut(K)]|}\left(1 + \frac {|[H,\Aut(K)]| - 1}{|H : L(H,\Aut(K))|}\right)
\geq & \frac
{|L(H,\Aut(K))|}{|H|} \\
& + \frac {p(|H| - |L(H,\Aut(K))|)}{|H||\Aut(K)|}.
\end{align*}
Hence, Theorem \ref{prop3.8} gives better lower bound than the lower bound given by Theorem \ref{thm3.6}.

\section{A few Characterizations}
 In this section, we obtain some characterizations of a subgroup $H$ of $G$ if equality holds in Corollary \ref{bound_like3/4} and Corollary \ref{bound_like5/8}. We begin with the following result.
\begin{theorem}\label{prop4.1}
Let $H$ and $K$ be two subgroups of a finite group $G$ such that $H \subseteq K$. If $\Pr(H,\Aut(K)) = \frac {p + q -1}{pq}$ for some primes $p$ and $q$. Then  $pq$  divides $|H||\Aut(K)|$. Further, if $p$ and $q$ are the smallest primes dividing $|\Aut(K)|$ and $|H|$ respectively, then
\[
\frac{H}{L(H,\Aut(K))} \cong \mathbb Z_q.
\]
In particular, if $H$ and $\Aut (K)$ are of even order and $\Pr(H,\Aut(K)) = \frac{3}{4}$ then $\frac{H}{L(H,\Aut(K))}\cong\mathbb Z_2$.
\end{theorem}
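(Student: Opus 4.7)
The plan is to extract the equality case of the proof of Corollary \ref{bound_like3/4}. Recall that corollary is established via the chain
\[
\Pr(H,\Aut(K)) \leq \frac{1}{p}\left(\frac{p-1}{|H : L(H,\Aut(K))|} + 1\right) \leq \frac{p+q-1}{pq}.
\]
A routine rearrangement (clear denominators, then divide by $p-1 \geq 1$) shows the second inequality is equivalent to $|H : L(H,\Aut(K))| \geq q$, with equality precisely when $|H : L(H,\Aut(K))| = q$. Under the hypothesis that $\Pr(H,\Aut(K))$ attains the upper bound $(p+q-1)/(pq)$, with $p$ and $q$ the smallest primes dividing $|\Aut(K)|$ and $|H|$ respectively, both inequalities must be equalities. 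Hence $|H : L(H,\Aut(K))| = q$.

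Since $q$ is prime and $L(H,\Aut(K))$ is normal in $H$, the quotient $H/L(H,\Aut(K))$ is a group of prime order $q$, hence cyclic; this gives $H/L(H,\Aut(K)) \cong \mathbb{Z}_q$. The divisibility $pq \mid |H||\Aut(K)|$ now follows from $p \mid |\Aut(K)|$ and $q \mid |H|$: if $p \neq q$ the two primes are coprime and the conclusion is immediate, while if $p = q$ the separate divisibilities $p \mid |\Aut(K)|$ and $p \mid |H|$ combine to give $p^2 \mid |H||\Aut(K)|$. The \emph{In particular} clause then follows by specializing to $p = q = 2$, for which $(p+q-1)/(pq) = 3/4$.

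The main obstacle lies in interpreting the theorem's first sentence, which speaks of ``some primes $p$ and $q$'' satisfying $\Pr(H,\Aut(K)) = (p+q-1)/(pq)$ without yet tying $p, q$ to the smallest-prime condition introduced in the subsequent sentence. In full generality this claim is delicate: for instance, $p = 2$, $q = 3$ gives $(p+q-1)/(pq) = 4/6$, a non-reduced fraction, and one cannot conclude $pq \mid |H||\Aut(K)|$ from arithmetic alone since $\gcd(pq, p+q-1) = 2 > 1$. My plan is therefore to read the theorem throughout under the convention that $p, q$ denote the smallest primes of $|\Aut(K)|$ and $|H|$; this is the natural interpretation forced by the statement of the second clause, and under it the divisibility $pq \mid |H||\Aut(K)|$ drops out of the preceding equality analysis.
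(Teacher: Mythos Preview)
Your argument for the quotient $H/L(H,\Aut(K)) \cong \mathbb{Z}_q$ is essentially the paper's: both invoke the upper bound of Theorem~\ref{thm3.6} (equivalently, the chain in Corollary~\ref{bound_like3/4}) to obtain $|H:L(H,\Aut(K))| \leq q$, and then conclude equality from the standing hypothesis $H \neq L(H,\Aut(K))$ together with minimality of $q$.

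Where you diverge from the paper is on the divisibility claim. The paper does \emph{not} impose the smallest-prime condition for this part; it argues directly from \eqref{GenAutComDeg} that
\[
(p+q-1)\,|H|\,|\Aut(K)| \;=\; pq\,\bigl|\{(x,\alpha)\in H\times\Aut(K):[x,\alpha]=1\}\bigr|
\]
and infers $pq \mid |H||\Aut(K)|$. Your concern about this step is well founded: the inference tacitly assumes $\gcd(pq,\,p+q-1)=1$, which fails exactly when $p \mid q-1$ or $q \mid p-1$ (your example $p=2$, $q=3$ is an instance). So the paper's own proof of the first sentence, read in the stated generality ``for some primes $p$ and $q$'', carries the very gap you identified. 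Your fix---reading the whole theorem under the smallest-prime convention, so that $p\mid|\Aut(K)|$ and $q\mid|H|$ hold by hypothesis---repairs the argument at the cost of narrowing the first claim. One may also observe that when $p=q$ the paper's arithmetic does go through, since $\gcd(p^2,2p-1)=1$; it is only the case $p\neq q$ with $p\mid q-1$ or $q\mid p-1$ that the paper leaves unaddressed.
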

\begin{proof}
By \eqref{GenAutComDeg}, we have $(p + q -1)|H||\Aut(K)| = pq|\{(x,\alpha)\in H\times \Aut(K):[x,\alpha]=1\}|$. Therefore, $pq$ divides $|H||\Aut(K)|$.

If $p$ and $q$ are the smallest primes dividing $|\Aut(K)|$ and $|H|$ respectively then, by Theorem \ref{thm3.6}, we have
\[
\frac{p + q -1}{pq} \leq \frac{1}{p}\left(\frac{p - 1}{|H : L(H,\Aut(K))|} + 1\right)
\]
which gives $|H : L(H,\Aut(K))| \leq q$. Hence, $\frac{H}{L(H,\Aut(K))} \cong \mathbb Z_q$.
\end{proof}

\begin{theorem}\label{prop4.2}
Let $H \subseteq K$ be two subgroups of a finite group $G$ such that  $H$ is non-abelian and $\Pr(H,\Aut(K)) = \frac {q^2 + p - 1}{pq^2}$ for some primes $p$ and $q$. Then $pq$ divides  $|H||\Aut(K)|$. Further, if $p$ and $q$ are the smallest primes dividing $|\Aut(K)|$ and $|H|$ respectively then
\[
\frac{H}{L(H,\Aut(K))}\cong\mathbb Z_q \times \mathbb Z_q.
\]
In particular, if $H$ and $\Aut (K)$ are of even order and $\Pr(H,\Aut(K)) = \frac{5}{8}$ then $\frac{H}{L(H,\Aut(K))} \cong \mathbb Z_2 \times \mathbb Z_2$.
\end{theorem}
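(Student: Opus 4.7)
The plan is to parallel the proof of Theorem \ref{prop4.1}, using Corollary \ref{bound_like5/8} in place of Corollary \ref{bound_like3/4}. First I rewrite the hypothesis via \eqref{GenAutComDeg} with $g = 1$, obtaining
\[
(q^2+p-1)|H||\Aut(K)| = pq^2\bigl|\{(x,\alpha)\in H\times \Aut(K):[x,\alpha]=1\}\bigr|,
\]
and conclude as in Theorem \ref{prop4.1} that $pq$ divides $|H||\Aut(K)|$. This takes care of the first assertion and does not require $p,q$ to be the smallest prime divisors.

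Next, assume that $p$ and $q$ are the smallest primes dividing $|\Aut(K)|$ and $|H|$ respectively. The proof of Corollary \ref{bound_like5/8} chains Theorem \ref{thm3.6} with the inequality $|H:L(H,\Aut(K))| \geq q^2$ (which uses that $H$ is non-abelian) to yield $\Pr(H,\Aut(K)) \leq \frac{1}{p}\bigl(\frac{p-1}{|H:L(H,\Aut(K))|}+1\bigr) \leq \frac{q^2+p-1}{pq^2}$. Tracing the equality condition back through this chain, the assumption $\Pr(H,\Aut(K)) = \frac{q^2+p-1}{pq^2}$ forces $|H:L(H,\Aut(K))| = q^2$, so $H/L(H,\Aut(K))$ is a group of order $q^2$ and hence isomorphic to either $\mathbb Z_{q^2}$ or $\mathbb Z_q \times \mathbb Z_q$.

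To rule out the cyclic possibility I use the containment $L(H,\Aut(K)) \subseteq H \cap Z(K) \subseteq Z(H)$; the second inclusion holds because any element of $H$ that commutes with every element of $K \supseteq H$ certainly lies in $Z(H)$. Consequently $H/Z(H)$ is a quotient of $H/L(H,\Aut(K))$, and if the latter were cyclic so would be the former, making $H$ abelian against hypothesis. Therefore $H/L(H,\Aut(K)) \cong \mathbb Z_q \times \mathbb Z_q$. The final assertion is the specialization $p=q=2$, where $\frac{q^2+p-1}{pq^2} = \frac{5}{8}$, giving $H/L(H,\Aut(K)) \cong \mathbb Z_2 \times \mathbb Z_2$. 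The only delicate step is the equality-tracking in the second paragraph; everything else is direct.
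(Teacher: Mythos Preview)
Your proof is correct and follows essentially the same route as the paper: rewrite the hypothesis via \eqref{GenAutComDeg} to get the divisibility statement, then feed the equality into the bound from Theorem \ref{thm3.6} to pin down $|H:L(H,\Aut(K))|=q^2$. Your argument is in fact slightly more complete than the paper's: the paper only records that $|H:L(H,\Aut(K))|\neq 1,q$ and then jumps to $H/L(H,\Aut(K))\cong\mathbb Z_q\times\mathbb Z_q$, whereas you explicitly use $L(H,\Aut(K))\subseteq Z(H)$ to rule out the cyclic possibility $\mathbb Z_{q^2}$, which is the step that actually requires the non-abelian hypothesis at order $q^2$.
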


\begin{proof}
By \eqref{GenAutComDeg}, we have $(q^2 + p -1)|H||\Aut(K)| = pq^2|\{(x,\alpha)\in H\times \Aut(K):[x,\alpha]=1\}|$. Therefore, $pq$ divides $|H||\Aut(K)|$.

If $p$ and $q$ are the smallest primes dividing $|\Aut(K)|$ and $|H|$ respectively then, by Theorem \ref{thm3.6}, we have
\[
\frac{q^2 + p -1}{pq^2} \leq \frac{1}{p}\left(\frac{p - 1}{|H : L(H,\Aut(K))|} + 1\right)
\]
which gives $|H : L(H,\Aut(K))| \leq q^2$. Since $H$ is non-abelian we have $|H : L(H,\Aut(K))| \neq 1, q$. Hence, $\frac{H}{L(H,\Aut(K))} \cong \mathbb Z_q \times \mathbb Z_q$.
\end{proof}


The following two results give partial converses of Theorem \ref{prop4.1} and \ref{prop4.2} respectively.

\begin{proposition}
Let $H$ and $K$ be two subgroups of a finite group $G$ such that $H \subseteq K$. Let $p, q$ be the smallest prime divisors of $|\Aut(K)|$, $|H|$ respectively and $|\Aut(K) : C_{\Aut(K)}(x)| = p$ for all $x \in H \setminus L(H,\Aut(K))$.
\begin{enumerate}
\item
If $\frac{H}{L(H,\Aut(K))} \cong \mathbb Z_q$ then $\Pr(H,\Aut(K)) = \frac {p + q - 1}{pq}$.
\item
If $\frac{H}{L(H,\Aut(K))} \cong\mathbb Z_q \times \mathbb Z_q$  then $\Pr(H,\Aut(K)) = \frac {q^2 + p - 1}{pq^2}$.
\end{enumerate}
\end{proposition}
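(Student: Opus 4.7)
The plan is to compute $\Pr(H,\Aut(K))$ directly via the centralizer formula in Corollary \ref{formula2}, which gives
\[
\Pr(H,\Aut(K)) = \frac{1}{|H||\Aut(K)|}\sum_{x\in H}|C_{\Aut(K)}(x)|.
\]
I would split this sum into the contribution from $L(H,\Aut(K))$ and the contribution from $H\setminus L(H,\Aut(K))$, handling each piece with one of the two hypotheses.

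First, note that for $x\in L(H,\Aut(K))$ every $\alpha\in\Aut(K)$ fixes $x$, so $C_{\Aut(K)}(x)=\Aut(K)$, and the contribution from this part is exactly $|L(H,\Aut(K))|\,|\Aut(K)|$. Next, the standing hypothesis $|\Aut(K):C_{\Aut(K)}(x)|=p$ for $x\in H\setminus L(H,\Aut(K))$ forces $|C_{\Aut(K)}(x)|=|\Aut(K)|/p$ on that set, contributing $(|H|-|L(H,\Aut(K))|)\cdot|\Aut(K)|/p$. Plugging both contributions back and cancelling $|\Aut(K)|$ yields the uniform intermediate identity
\[
\Pr(H,\Aut(K)) \;=\; \frac{1}{p}\left(1+\frac{p-1}{|H:L(H,\Aut(K))|}\right),
\]
which is the workhorse for both parts.

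To finish, I would simply substitute the hypothesized index of $L(H,\Aut(K))$ in $H$. For (a), $H/L(H,\Aut(K))\cong\mathbb Z_q$ gives $|H:L(H,\Aut(K))|=q$, and the right-hand side becomes $\frac{1}{p}\bigl(1+\frac{p-1}{q}\bigr)=\frac{p+q-1}{pq}$. For (b), $H/L(H,\Aut(K))\cong\mathbb Z_q\times\mathbb Z_q$ gives $|H:L(H,\Aut(K))|=q^{2}$, and the same identity produces $\frac{1}{p}\bigl(1+\frac{p-1}{q^{2}}\bigr)=\frac{q^{2}+p-1}{pq^{2}}$.

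No real obstacle is expected; the only subtle point to verify cleanly is the first observation that $L(H,\Aut(K))$ is precisely the set of $x\in H$ for which $C_{\Aut(K)}(x)=\Aut(K)$, which is immediate from the definition $L(H,\Aut(K))=\bigcap_{\alpha\in\Aut(K)}C_H(\alpha)$ already recorded in the introduction. After that, both statements follow from a single line of arithmetic.
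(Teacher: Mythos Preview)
Your proposal is correct and follows essentially the same approach as the paper: both split the centralizer sum from Corollary~\ref{formula2} over $L(H,\Aut(K))$ and its complement, use the index hypothesis to obtain the intermediate identity $\Pr(H,\Aut(K)) = \frac{1}{p}\bigl(1+\frac{p-1}{|H:L(H,\Aut(K))|}\bigr)$, and then substitute $q$ or $q^2$ for the index.
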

\begin{proof}
Since $|\Aut(K) : C_{\Aut(K)}(x)| = p$ for all $x \in H \setminus L(H,\Aut(K))$ we have $|C_{\Aut(K)}(x)| = \frac{|\Aut(K)|}{p}$ for all $x \in H \setminus L(H,\Aut(K))$. Therefore, by Corollary \ref{formula2}, we have
\begin{align*}
\Pr(H,\Aut(K)) &= \frac{|L(H,\Aut(K))|}{|H|} + \frac {1}{|H||\Aut(K)|} \underset{x \in H \setminus L(H,\Aut(K))}{\sum}|C_{\Aut(K)}(x)|\\
&= \frac {|L(H,\Aut(K))|}{|H|} + \frac{|H| - |L(H,\Aut(K))|}{p|H|}.
\end{align*}
Thus
\begin{equation}\label{conv_eq}
\Pr(H,\Aut(K)) = \frac{1}{p}\left(\frac{p - 1}{|H : L(H,\Aut(K))|}  + 1\right).
\end{equation}
(a) If $\frac{H}{L(H,\Aut(K))} \cong \mathbb Z_q$ then \eqref{conv_eq} gives $\Pr(H,\Aut(K)) = \frac {p + q - 1}{pq}$.

\noindent (b) If $\frac{H}{L(H,\Aut(K))} \cong\mathbb Z_q \times \mathbb Z_q$  then \eqref{conv_eq} gives $\Pr(H,\Aut(K)) = \frac {q^2 + p - 1}{pq^2}$.
\end{proof}

\section{Autoisoclinic pairs}
In the year 1940, Hall \cite{pH40} introduced the concept of  isoclinism between two groups.
Following Hall, Moghaddam et al.\ \cite{msE13} have defined autoisoclinism between two groups, in the year 2013. Recall that two groups $G_1$ and $G_2$ are said to be autoisoclinic  if there exist isomorphisms $\psi : \frac{G_1}{L(G_1)} \to \frac{G_2}{L(G_2)}$, $\beta : [G_1, \Aut(G_1)] \to [G_2, \Aut(G_2)]$ and $\gamma : \Aut(G_1) \to \Aut(G_2)$ such that the following diagram commutes
\begin{center}
$
\begin{CD}
   \frac{G_1}{L(G_1)} \times \Aut(G_1) @>\psi \times \gamma>> \frac{G_2}{L(G_2)} \times
   \Aut(G_2)\\
   @VV{a_{(G_1, \Aut(G_1))}}V  @VV{a_{(G_2, \Aut(G_2))}}V\\
   [G_1, \Aut(G_1)] @> \beta >> [G_2, \Aut(G_2)]
\end{CD}
$
\end{center}
where the maps $a_{(G_i, \Aut(G_i))}: \frac{G_i}{L(G_i)} \times \Aut(G_i) \to [G_i, \Aut(G_i)]$, for  $i = 1, 2$,
 are given by
\[
a_{(G_i, \Aut(G_i))}(x_iL(G_i), \alpha_i) = [x_i, \alpha_i].
\]
 Such a pair $(\psi\times\gamma, \beta)$ is called an autoisoclinism between the groups $G_1$ and $G_2$.
We generalize the notion of autoisoclinism in the following way:

Let  $H_1, K_1$ and $H_2, K_2$ be subgroups of the groups $G_1$ and $G_2$  respectively.   The pairs of subgroups $(H_1, K_1)$ and $(H_2,
K_2)$ such that $H_1 \subseteq
K_1$ and $H_2 \subseteq K_2$ are said to be autoisoclinic  if there exist isomorphisms $\psi : \frac{H_1}{L(H_1, \Aut{K_1})} \to \frac{H_2}{L(H_2, \Aut(K_2))}$, $\beta :
[H_1, \Aut(K_1)] \to [H_2, \Aut(K_2)]$ and $\gamma : \Aut(K_1) \to \Aut(K_2)$ such that the following diagram commutes

\vspace{.25cm}
\begin{center}
$
\begin{CD}
   \frac{H_1}{L(H_1, \Aut(K_1))} \times \Aut(K_1) @>\psi \times \gamma>> \frac{H_2}{L(H_2, \Aut(K_2))} \times
   \Aut(K_2)\\
   @VV{a_{(H_1, \Aut(K_1))}}V  @VV{a_{(H_2, \Aut(K_2))}}V\\
   [H_1, \Aut(K_1)] @> \beta >> [H_2, \Aut(K_2)]
\end{CD}
$
\end{center}
\noindent where the maps $a_{(H_i, \Aut(K_i))}: \frac{H_i}{L(H_i, \Aut(K_i))} \times \Aut(K_i) \to (H_i, \Aut(K_i))$, for $i = 1,
2$, are given by
\[
a_{(H_i, \Aut(K_i))}(x_iL(H_i, \Aut(K_i)), \alpha_i) = [x_i, \alpha_i].
\]
Such a pair $(\psi\times\gamma, \beta)$ is said to be an autoisoclinism between the pairs of groups $(H_1, K_1)$
and $(H_2, K_2)$. We conclude this paper with the following   generalization of  \cite[Theorem 5.1]{pd1} and \cite[Lemma 2.5]{Rismanchian15}.
\begin{theorem}
Let $G_1$ and $G_2$ be two finite groups with subgroups $H_1, K_1$ and $H_2, K_2$ respectively such that $H_1 \subseteq K_1$ and $H_2 \subseteq K_2$. If $(\psi\times\gamma, \beta)$ is an autoisoclinism between the pairs $(H_1, K_1)$ and
$(H_2, K_2)$ then, for $g \in K_1$,
\[
{\Pr}_g(H_1, \Aut(K_1))
= {\Pr}_{\beta (g)}(H_2, \Aut(K_2)).
\]
\end{theorem}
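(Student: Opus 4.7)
The plan is to use the commutativity of the autoisoclinism diagram to transport the set of pairs in $H_1 \times \Aut(K_1)$ with autocommutator $g$ bijectively onto the corresponding set for $\beta(g)$ in $H_2 \times \Aut(K_2)$, after passing to the quotient by $L_i := L(H_i,\Aut(K_i))$ on which $\psi$ is defined. Write
\[
A_i = \{(x,\alpha) \in H_i \times \Aut(K_i) : [x,\alpha] = g_i\},
\]
with $g_1 = g$ and $g_2 = \beta(g)$. The goal is $|A_1|/(|H_1||\Aut(K_1)|) = |A_2|/(|H_2||\Aut(K_2)|)$.

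First I would verify that the autocommutator is constant on $L_i$-cosets in the first slot, which is precisely what makes the vertical maps $a_{(H_i,\Aut(K_i))}$ of the diagram well defined. From the introduction, $L_i \subseteq H_i \cap Z(K_i)$ and every element of $L_i$ is fixed by every $\alpha \in \Aut(K_i)$, so for $x \in H_i$, $\alpha \in \Aut(K_i)$ and $z \in L_i$,
\[
[xz,\alpha] = z^{-1}x^{-1}\alpha(x)\alpha(z) = z^{-1}[x,\alpha]z = [x,\alpha],
\]
the last equality using that $z$ is central in $K_i$ while $[x,\alpha] \in K_i$. Hence the projection $A_i \to \bar A_i := \{(xL_i,\alpha) : [x,\alpha] = g_i\}$ has constant fibre size $|L_i|$, giving $|A_i| = |L_i|\cdot|\bar A_i|$.

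Next, using commutativity of the autoisoclinism diagram, the map $(xL_1,\alpha) \mapsto (\psi(xL_1),\gamma(\alpha))$ sends $\bar A_1$ into $\bar A_2$: indeed, commutativity gives $a_{(H_2,\Aut(K_2))}(\psi(xL_1),\gamma(\alpha)) = \beta([x,\alpha]) = \beta(g) = g_2$. Running the same argument with the inverse triple $(\psi^{-1},\gamma^{-1},\beta^{-1})$, which is itself an autoisoclinism in the opposite direction, shows this map is a bijection, so $|\bar A_1| = |\bar A_2|$. Combining with $|\Aut(K_1)| = |\Aut(K_2)|$ (from $\gamma$) and $|H_1|/|L_1| = |H_2|/|L_2|$ (from $\psi$) yields
\[
\frac{|A_1|}{|H_1||\Aut(K_1)|} = \frac{|\bar A_1|}{(|H_1|/|L_1|)|\Aut(K_1)|} = \frac{|\bar A_2|}{(|H_2|/|L_2|)|\Aut(K_2)|} = \frac{|A_2|}{|H_2||\Aut(K_2)|},
\]
and the theorem follows from the defining equation \eqref{GenAutComDeg}.

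The proof is essentially bookkeeping once the descent to the quotient is in hand; the only place needing a small computation is the identity $[xz,\alpha] = [x,\alpha]$, where the centrality $L_i \subseteq Z(K_i)$ is essential in cancelling the conjugation by $z$. The rest is formal manipulation of the three isomorphisms $\psi, \gamma, \beta$ supplied by the autoisoclinism hypothesis.
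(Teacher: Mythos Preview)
Your proof is correct and follows essentially the same approach as the paper: pass to the quotient by $L_i$, use the commuting square to identify the two ``reduced'' solution sets, and then multiply back by $|L_i|$ to recover the original counts. The paper's version is terser---it simply asserts $|\mathcal{S}_g| = |\mathcal{T}_{\beta(g)}|$ and that the fibre count is ``clear''---whereas you spell out the well-definedness computation $[xz,\alpha] = [x,\alpha]$ and the bijection explicitly, but the architecture is identical.
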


\begin{proof}
Let us consider the sets $\mathcal{S}_g = \{(x_1L(H_1,\Aut(K_1)),\alpha_1)\in \frac {H_1}{L(H_1,\Aut(K_1))}\times
\Aut(K_1):[x_1L(H_1,\Aut(K_1)),\alpha_1]=g\}$ and $\mathcal{T}_{\beta (g)}=\{(x_2L(H_2,\Aut(K_2)),\alpha_2)\in \frac
{H_2}{L(H_2,\Aut(K_2))}\times \Aut(K_2):[x_2L(H_2,\Aut(K_2)),\alpha_2] = \beta (g)\}$. Since  $(H_1, K_1)$ is 
autoisoclinic to $(H_2, K_2)$ we have $|\mathcal{S}_g| = |\mathcal{T}_{\beta (g)}|$. Again, it is clear that
\begin{equation}\label{auto-eq5.1}
|\{(x_1,\alpha_1)\in H_1\times \Aut(K_1):
[x_1,\alpha_1] = g\}|=|L(H_1,\Aut(K_1))||\mathcal{S}_g| 
\end{equation}
and
\begin{equation}\label{auto-eq5.2}
|\{(x_2,\alpha_2)\in H_2\times \Aut(K_2):
[x_2,\alpha_2]= \beta(g)\}|=|L(H_2,\Aut(K_2))||\mathcal{T}_{\beta (g)}|.
\end{equation}
Hence, the result follows from \eqref{GenAutComDeg}, \eqref{auto-eq5.1} and \eqref{auto-eq5.2}.
\end{proof}


\begin{thebibliography}{3}
\bibitem{cD07}
C. J. Hillar and  D. L. Rhea, {\em Automorphism of finite abelian groups}, Amer. Math. Monthly, {\bf 114}(10), 917--923 (2007).




\bibitem{DN10}
A. K. Das  and R. K. Nath,   {\em On generalized relative commutativity degree of a finite group},  Int. Electron. J. Algebra, {\bf 7}, 140--151 (2010).

\bibitem{pd1}
 P. Dutta and R. K. Nath, {\em Autocommuting probabilty of a finite group}, preprint.





\bibitem{pH40}
P. Hall,    {\em The classification of prime power groups},  J. Reine Angew. Math., {\bf 182}, 130--141  (1940).

\bibitem{hegarty}
P. V. Hegarty, \emph{The absolute centre of a group}, J. Algebra, {\bf 169}(3), 929--935  (1994).




\bibitem{moga}
M. R. R. Moghaddam, F. Saeedi and E. Khamseh, {\em The probability of an automorphism fixing a subgroup element of a finite group},  Asian-Eur. J. Math. {\bf 4}(2), 301–308 (2011).


\bibitem{msE13}
M. R. R. Moghaddam, M. J. Sadeghifard and M. Eshrati, {\em Some properties of autoisoclinism of groups},  Fifth International group theory conference, Islamic Azad University, Mashhad, Iran, 13-15 March 2013.


\bibitem{ND10}
R. K. Nath and A. K. Das,    {\em On a lower bound of commutativity degree},  Rend. Circ. Mat. Palermo, {\bf 59}(1), 137--142 (2010).


\bibitem{nY15}
R. K. Nath and M. K. Yadav, \emph{Some results on relative commutativity degree}, Rend. Circ. Mat. Palermo, {\bf 64}(2),  229--239 (2015).




\bibitem{Rismanchian15}
M. R. Rismanchian and Z. Sepehrizadeh,   {\em Autoisoclinism classes and autocommutativity degrees of finite groups}, Hacet. J. Math. Stat. {\bf 44}(4),  893--899 (2015).







\bibitem{sherman}
G. J. Sherman, \emph{What is the probability an automorphism fixes a group element?}, Amer. Math. Monthly, {\bf 82}, 261--264  (1975).


\end{thebibliography}
\end{document}